\def\XXint#1#2#3{{\setbox0=\hbox{$#1{#2#3}{\int}$ }
\vcenter{\hbox{$#2#3$ }}\kern-.6\wd0}}
\newtheorem{theorem}{Theorem}[section]
\newtheorem{corollary}{Corollary}[theorem]
\newtheorem{lemma}[theorem]{Lemma}
\newtheorem{definition}{Definition}[theorem]
\newtheorem{proposition}{Proposition}[theorem]
\newtheorem{remark}{Remark}[theorem]
\title{Special Hermitian metrics on Oeljeklaus-Toma manifolds}
\author{Alexandra Otiman}
\address[Alexandra Otiman]{
Dipartimento di Matematica e Fisica\\
Universit\`a degli Studi Roma Tre\\
Via della Vasca Navale, 84\\
00146 Roma, Italy\\
and Institute of Mathematics ``Simion Stoilow'' of the Romanian Academy, 21,
Calea Grivitei Street, 010702, Bucharest, Romania
}
\email{aiotiman@mat.uniroma3.it, alexandra.otiman@imar.ro}
\keywords{Oeljeklaus-Toma manifolds, pluriclosed metrics, balanced metrics, locally conformally balanced metrics}
\thanks{The author is supported by GNSAGA of INdAM}
\subjclass[2010]{53C55, 58A10, 53C10}
\begin{document}

\maketitle

\begin{abstract}
   Oeljeklaus-Toma (OT) manifolds are higher dimensional analogues of Inoue-Bombieri surfaces and their construction is associated to a finite extension $K$ of $\mathbb{Q}$ and a subgroup of units $U$. We characterize the existence of {\it pluriclosed} metrics (also known as {\it strongly K\" ahler with torsion (SKT)} metrics) on any OT manifold $X(K, U)$ purely in terms of number-theoretical conditions, yielding restrictions on the third Betti number $b_3$ and the Dolbeault cohomology group $H^{2,1}_{\overline{\partial}}$. Combined with the main result in \cite{dub20}, these numerical conditions render explicit examples of pluriclosed OT manifolds in arbitrary complex dimension. We prove that in complex dimension 4 and type $(2, 2)$, the existence of a pluriclosed metric on $X(K, U)$ is entirely topological, namely, it is equivalent to $b_3=2$. Moreover, we provide an explicit example of an OT manifold of complex dimension 4 carrying a pluriclosed metric. Finally, we show that no OT manifold admits {\it balanced metrics}, but all of them carry instead {\it locally conformally balanced metrics}. 
\end{abstract}

\section{Introduction}
Oeljeklaus-Toma (OT) manifolds were introduced in \cite{OT}, as a generalization in any complex dimension of Inoue-Bombieri surfaces. Their construction arises from number theory and they possess  remarkable cohomological and metric properties: they are non-K\" ahler compact complex manifolds, satisfying Hodge decomposition (\cite{aOmT}), with de Rham and Dolbeault cohomology describable in terms of number-theoretical invariants (\cite{IO}, \cite{aOmT}, \cite{k20}) and carrying a solvmanifold structure $\Gamma \backslash G$ as shown in \cite{k13}.

OT manifolds have been intensively studied especially in light of locally conformally K\" ahler (lcK) geometry, since a large subclass was shown to carry locally conformally K\" ahler metrics. Moreover, they provided examples that disproved a conjecture of Vaisman, according to which a compact lcK manifold whose Betti numbers obey the same restrictions as for a compact K\" ahler manifold must carry a K\" ahler metric. The existence of lcK metrics on OT manifolds has been translated in a number-theoretical condition, interesting per se, as we shall briefly recall in the sequel. Since the study of the Hermitian geometry of OT manifolds has been mostly reduced to the lcK condition, we shall be interested in investigating the existence of other special metrics of non-K\" ahler type ({\it pluriclosed} ({\it strongly K\" ahler with torsion or briefly SKT}), {\it balanced} and {\it locally conformally balanced metrics})  and to give an arithmetic or topological interpretation of these metric properties. The dictionary between the geometry of these manifolds and independent number-theoretical problems is part of the motivation for the problem we consider. On the other hand, the study of special Hermitian metrics has shed light on numerous interesting examples of non-K\" ahler manifolds and posed also the question of compatibility between different structures of non-K\" ahler type. In this direction, we recall a conjecture of Fino-Vezzoni according to which a compact complex manifold admitting both a pluriclosed and a balanced metric is K\" ahler. This has been already proven in specific cases in the works of Verbitsky, Chiose, Fino-Vezzoni and Fu-Li-Yau (see \cite{ver}, \cite{c14}, \cite{fv16}, \cite{fly12}). We give a complete description of the existence of pluriclosed and balanced metrics on OT manifolds and give further evidence supporting this conjecture. Before presenting the main results, we briefly recall the construction.  
\subsection{Definition of OT manifolds}
Let $\mathbb{Q} \subseteq K$ be an algebraic number field and take the $[K: \mathbb{Q}]=s+2t$ embeddings of the field $K$ in $\mathbb{C}$: the $s$ real embeddings $\sigma_1,\ldots,\sigma_s\colon K \hookrightarrow \mathbb{R}$, and the $2t$ complex embeddings $\sigma_{s+1},\ldots,\sigma_{s+t},\sigma_{s+t+1}=\overline\sigma_{s+1},\ldots,\sigma_{s+2t}=\overline\sigma_{s+t} \colon K\hookrightarrow \mathbb{C}$, that come in pairs. We shall always consider only the case when $s, t \geq 1$.

Let $\mathcal O_K$ be the ring of algebraic integers of $K$ and $\mathcal O_K^{*,+}$ the group of totally positive units. Let $\mathbb H:=\left\{z\in\mathbb{C} \mid \mathrm{Im} z>0\right\}$ denote the upper half-plane. On $\mathbb H^s\times\mathbb{C}^t$, consider the action $\mathcal{O}_K \circlearrowleft \mathbb H^s\times\mathbb{C}^t$ given by translations,
$$ T_a(w_1,\ldots,w_s,z_{1},\ldots,z_{t}) := (w_1+\sigma_1(a),\ldots,z_{t}+\sigma_{s+t}(a)) , $$
and the action $\mathcal{O}_K^{*,+} \circlearrowleft \mathbb H^s\times \mathbb{C}^t$ given by rotations,
$$ R_u(w_1,\ldots,w_s,z_{1},\ldots,z_{t}) := (w_1\cdot \sigma_1(u),\ldots,z_{t}\cdot \sigma_{s+t}(u)) . $$
Let 
\begin{equation*}
   l: \mathcal{O}^{*, +}_K \rightarrow \mathbb{R}^{s+t}
\end{equation*}
\begin{equation*}
   l(u)=\left(\mathrm{log}\,\sigma_1(u), \ldots, \mathrm{log}\,\sigma_s(u), 2\mathrm{log}\,|\sigma_{s+1}(u)|, \ldots, 2\mathrm{log}\,|\sigma_{s+t}(u)| \right).
\end{equation*}
Since $u$ is a unit, $\mathrm{Im}\, l \subseteq H:=\{(x_1, \ldots, x_{s+t}) \in \mathbb{R}^{s+t} \mid \sum^{s+t}_{i=1}x_i=0\}$. 
Oeljeklaus and Toma proved in \cite{OT} there exists $U$ a free subgroup of rank $s$ of $\mathcal{O}^{*, +}_{K}$ such that $pr_{\mathbb{R}^s} \circ l(U)$ is a lattice of rank $s$ in $\mathbb{R}^s$, where $pr_{\mathbb{R}^s}: \mathbb{R}^{s+t} \rightarrow \mathbb{R}^s$ is the projection on the first $s$ coordinates. Therefore, the action of $U \ltimes \mathcal{O}_K \circlearrowleft \mathbb{H}^s\times \mathbb{C}^t$ is fixed-point-free, properly discontinuous, and co-compact. 

The {\em OT manifold} associated to the algebraic number field $K$ and to the {\it admissible} subgroup $U$ of $\mathcal O_K^{*,+}$ is
$$X(K,U) \;:=\; \left. \mathbb H^s\times\mathbb{C}^t \middle\slash U \ltimes \mathcal O_K \right.$$
and we shall often refer to it as of type $(s, t)$. It is a compact complex manifold of dimension $n:=s+t$ and in the case $n=2$, the construction gives the Inoue-Bombieri surfaces.
\subsection{Non-K\" ahler metrics on OT manifolds}
OT manifolds have provided very interesting examples for locally conformally K\" ahler geometry, as they represent among the very few examples of non-Vaisman type in complex dimension $> 2$, along with Kato manifolds and non-diagonal Hopf manifolds. We recall that a {\it locally conformally K\" ahler metric (lcK)} $\Omega$ is a Hermitian metric which satisfies $d\Omega=\theta \wedge \Omega$ for a closed one-form $\theta$. Equivalently, it can be described as a Hermitian metric for which there exists a covering of the manifold with open sets $(U_i)_i$ and smooth functions $f_i$ on $U_i$ such that $e^{-f_i}\Omega$ is a K\" ahler metric on $U_i$.  It was proven in \cite{OT} and \cite[Appendix]{dub} that the existence of an lcK metric on a generic $X(K, U)$ is equivalent to the following arithmetic condition:
\begin{equation}\label{lckcond}
    |\sigma_{s+1}(u)|= \ldots = |\sigma_{s+t}(u)|, \qquad \forall u \in U,
\end{equation}
which can independently be studied as a number theory problem (see for instance \cite{dub}, \cite{v}) and automatically reveals that all $X(K, U)$ of type $(s, 1)$ carry an lcK metric. As a matter of fact, the translation of geometrical properties of $X(K, U)$ in a number-theoretical language has been a recurrent theme in various instances of their study (see also \cite{bra} and \cite{apv16}). 
In this short note will shall diversify this study towards finding special Hermitian metrics of non-K\" ahler type and the corresponding arithmetic interpretation. Namely, we prove the following results:
\begin{theorem}\label{thmSKT}
 The following are equivalent:
\begin{itemize}
    \item $X(K, U)$ admits a pluriclosed metric
    \item $s\leq t$ and after possibly relabeling the embeddings,
    \begin{equation}\label{SKTcond}
        |\sigma_{s+i}(u)|^2\sigma_i(u)=1, \qquad \forall u \in U,\  \forall i \in \{1, \ldots, s\}, \qquad and\, \ \ |\sigma_{s+i}(u)|=1, \forall u \in U,\  \forall i>s.
    \end{equation}
\end{itemize}
\end{theorem}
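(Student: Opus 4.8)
The strategy is to reduce the problem to invariant metrics and then convert $\partial\bar\partial$-closedness into the arithmetic condition \eqref{SKTcond} via the structure equations of an adapted coframe. On the universal cover $\mathbb H^s\times\mathbb C^t$, with coordinates $w_1,\dots,w_s,z_1,\dots,z_t$, I would introduce the $(1,0)$-coframe
\[
\varphi_j=\frac{dw_j}{\mathrm{Im}\,w_j}\quad(1\le j\le s),\qquad
\psi_k=\frac{dz_k}{\prod_{j=1}^s(\mathrm{Im}\,w_j)^{c_{kj}/2}}\quad(1\le k\le t),
\]
where the real exponents $c_{kj}$ are the unique solution of $\sum_j c_{kj}\log\sigma_j(u)=2\log|\sigma_{s+k}(u)|$ for all $u\in U$ (solvable and unique since $pr_{\mathbb R^s}\circ l(U)$ spans $\mathbb R^s$). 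A direct check shows each $\varphi_j$ is $U\ltimes\mathcal O_K$-invariant, while $\psi_k$ is invariant up to the phase $\sigma_{s+k}(u)/|\sigma_{s+k}(u)|$; hence $\varphi_j\wedge\bar\varphi_{j'}$ and $\psi_k\wedge\bar\psi_k$ descend to $X(K,U)$, whereas the mixed $w$--$z$ terms and the off-diagonal $z$-terms do not. Differentiating yields $d\varphi_j=\tfrac i2\varphi_j\wedge\bar\varphi_j$ and $d\psi_k=\tfrac i4\bigl(\sum_j c_{kj}(\varphi_j-\bar\varphi_j)\bigr)\wedge\psi_k$. I would also record the column relation $\sum_{k=1}^t c_{kj}=-1$ for every $j$, obtained by summing the defining equations over $k$ and using that totally positive units have norm $1$.

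Next I would argue that $X(K,U)$ admits a pluriclosed metric iff it admits an invariant one: averaging any pluriclosed metric over the compact torus fibre of the natural fibration $X(K,U)\to\mathbb R^s_{>0}/U$ preserves positivity and $\partial\bar\partial$-closedness and reduces the coefficients to functions of $\mathrm{Im}\,w$, after which a further symmetrization on the base torus lets me assume constant coefficients. The space of invariant $(1,1)$-forms is then spanned by the $i\varphi_j\wedge\bar\varphi_{j'}$ and the $i\psi_k\wedge\bar\psi_k$. Since the hyperbolic factors $i\varphi_j\wedge\bar\varphi_j$ are Kähler, the $w$-block may be taken closed, so the entire obstruction sits in the $z$-block. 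Using the structure equations I would compute
\[
\partial\bar\partial\bigl(i\,\psi_k\wedge\bar\psi_k\bigr)
=\frac i4\Bigl(\sum_j c_{kj}\,\varphi_j\wedge\bar\varphi_j+\sum_{j,l}c_{kj}c_{kl}\,\varphi_l\wedge\bar\varphi_j\Bigr)\wedge\psi_k\wedge\bar\psi_k .
\]
As the $(2,2)$-forms $\varphi_l\wedge\bar\varphi_j\wedge\psi_k\wedge\bar\psi_k$ are linearly independent, vanishing of $\partial\bar\partial\Omega$ is equivalent to $c_{kl}c_{kj}=0$ for $l\ne j$ together with $c_{kj}(1+c_{kj})=0$; that is, each $c_{kj}\in\{0,-1\}$ and each row $(c_{k1},\dots,c_{ks})$ has at most one nonzero entry.

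Finally I would combine these constraints with $\sum_k c_{kj}=-1$: every column then contains exactly one entry $-1$, and these lie in distinct rows, giving an injection $\{1,\dots,s\}\hookrightarrow\{1,\dots,t\}$ and hence $s\le t$. Upgrading this to $s=t$ requires ruling out a \emph{zero row}, i.e.\ an index $k$ with $|\sigma_{s+k}(u)|=1$ for all $u\in U$; I expect this to be the main obstacle, to be settled by a number-theoretic lemma showing that no complex embedding is trivial on an admissible $U$. Granting it, every row is nonzero, the pairing is a bijection, and after relabeling $c_{kj}=-\delta_{kj}$, which unwinds to precisely $|\sigma_{s+i}(u)|^2\sigma_i(u)=1$, i.e.\ \eqref{SKTcond}. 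Conversely, under \eqref{SKTcond} one has $c_{kj}=-\delta_{kj}$, all coefficients above vanish, and the explicit invariant metric $i\sum_j\varphi_j\wedge\bar\varphi_j+i\sum_k\psi_k\wedge\bar\psi_k$ is pluriclosed, closing the equivalence. Besides the zero-row exclusion, the step I would treat most carefully is the reduction to invariant metrics, since on solvmanifolds such symmetrizations are not automatic and must be justified through the torus fibration.
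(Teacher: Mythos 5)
Your skeleton is the same as the paper's: symmetrize to an invariant metric, turn $\partial\overline{\partial}\Omega=0$ into constraints on the exponent matrix via structure equations, then finish arithmetically. Your computation of $\partial\overline{\partial}(\mathrm{i}\,\psi_k\wedge\overline{\psi}_k)$ is correct, and keeping the cross terms $c_{kj}c_{kl}\,\varphi_l\wedge\overline{\varphi}_j$ is a genuine improvement: those off-diagonal constraints $c_{kj}c_{kl}=0$ are exactly what forbids one complex place from pairing with two real places, and they are indispensable (for type $(2,1)$ the norm relation forces $\sigma_1(u)\sigma_2(u)|\sigma_3(u)|^2=1$, i.e.\ a column of two $-1$'s, which passes the diagonal test $b_{ki}(b_{ki}+1)=0$ and is killed only by the cross terms; the paper's displayed contraction records the diagonal part only). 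However, two of your steps are genuine gaps. First, the claim that the invariant $(1,1)$-forms are spanned by the $\mathrm{i}\,\varphi_j\wedge\overline{\varphi}_{j'}$ and the diagonal $\mathrm{i}\,\psi_k\wedge\overline{\psi}_k$ is false, and with it the reduction to your ansatz. A correct symmetrization (the paper invokes the Belgun--Ugarte averaging against the bi-invariant volume, Lemma \ref{inv}; your two-stage averaging is not available as stated, since the base torus does not act on $X(K,U)$) produces constant coefficients in a genuinely left-invariant coframe, namely the paper's $\{\omega_j,\gamma_k\}$ in which $\gamma_k$ differs from your $\psi_k$ by a phase depending on $\mathrm{Im}\,w$. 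In that coframe the mixed terms $\omega_j\wedge\overline{\gamma}_k$ and the off-diagonal terms $\gamma_k\wedge\overline{\gamma}_l$ are invariant (in your coframe they carry non-constant phase coefficients, which is why your ansatz does not see them), so they must be allowed. The paper does not discard them; it proves they contribute nothing to the $\gamma_i\wedge\overline{\gamma}_i$-component of $dJd\Omega$. Your argument needs that lemma or an equivalent linear-independence statement; note also that $\partial\overline{\partial}(\varphi_j\wedge\overline{\varphi}_{j'})\neq 0$ for $j\neq j'$, so the $w$-block is not ``closed'' --- what saves you is only that its $\partial\overline{\partial}$ lies in the span of pure-$\varphi$ $(2,2)$-forms.

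The second gap is the one you flag and then assume: excluding a zero row, i.e.\ an index $k_0$ with $|\sigma_{s+k_0}(u)|=1$ for all $u\in U$. This is not a peripheral technicality but is equivalent to the statement being proved: if such a row occurred, with the remaining rows pairing bijectively with the real places, then your own diagonal metric $\mathrm{i}\sum_j\varphi_j\wedge\overline{\varphi}_j+\mathrm{i}\sum_k\psi_k\wedge\overline{\psi}_k$ would be an invariant pluriclosed metric on an OT manifold with $s<t$, contradicting the theorem; so ``granting it'' grants the theorem, and as written your proof only yields $s\le t$ together with \eqref{SKTcond} at $s$ of the complex places. It also does not follow from admissibility, which constrains the real embeddings alone; real number theory is needed. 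One way to close it: $|\sigma_{s+k_0}(u)|=1$ gives $\overline{\sigma}_{s+k_0}(u)=\sigma_{s+k_0}(u)^{-1}=\sigma_{s+k_0}(u^{-1})$, so the monic irreducible minimal polynomials of $u$ and $u^{-1}$ share a root, hence coincide, hence the conjugates of $u$ are stable under $x\mapsto 1/x$; feeding in the relations $|\sigma_{s+i}(u)|^2\sigma_{j(i)}(u)=1$ from the nonzero rows, this stability forces, for every $u\in U$, one of finitely many nontrivial linear relations among $\log\sigma_1(u),\dots,\log\sigma_s(u)$, so the rank-$s$ lattice $pr_{\mathbb{R}^s}(l(U))$ would be covered by finitely many proper subspaces, which is impossible. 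To be fair, the paper's own write-up also glides over this point (it passes from $b_{ki}\in\{0,-1\}$ and the row sums $\sum_i b_{ki}=-1$ directly to ``each column contains exactly one $-1$''), so you have correctly isolated the delicate step; but isolating it and assuming it is not the same as proving it.
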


\begin{theorem}
There are no balanced metrics on any OT manifold $X(K, U).$
\end{theorem}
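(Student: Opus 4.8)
The plan is to exhibit, on every $X(K,U)$, an exact real $(1,1)$-form that is semipositive and not identically zero, and then pair it against the power $\Omega^{n-1}$ of a putative balanced metric via Stokes' theorem to reach a contradiction. Recall that a Hermitian metric $\Omega$ on a complex $n$-manifold is balanced precisely when $d\Omega^{n-1}=0$. If such an $\Omega$ existed, then for any $d$-exact real $(1,1)$-form $\gamma=d\psi$ we would have
\begin{equation*}
\int_X \gamma\wedge\Omega^{n-1}=\int_X d\psi\wedge\Omega^{n-1}=\int_X d\bigl(\psi\wedge\Omega^{n-1}\bigr)=0,
\end{equation*}
using $d\Omega^{n-1}=0$ together with Stokes' theorem on the closed manifold $X$. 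On the other hand, if $\gamma\ge 0$ and $\gamma\not\equiv 0$, then $\gamma\wedge\Omega^{n-1}\ge 0$ pointwise, with strict inequality wherever $\gamma\neq 0$: for a positive definite $\Omega$ one has the elementary identity $\gamma\wedge\Omega^{n-1}=\tfrac1n(\operatorname{tr}_\Omega\gamma)\,\Omega^{n}$, and $\operatorname{tr}_\Omega\gamma\ge 0$ vanishes only when $\gamma=0$. Hence $\int_X\gamma\wedge\Omega^{n-1}>0$, a contradiction. It therefore suffices to produce one such $\gamma$.

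Writing $w_k=x_k+iy_k$ on the $\mathbb{H}$-factors, I would take
\begin{equation*}
\gamma:=\frac{i}{2}\,\frac{dw_1\wedge d\bar w_1}{y_1^2}=\frac{dx_1\wedge dy_1}{y_1^2},
\end{equation*}
the pullback of the hyperbolic Kähler form on the first factor, which is a semipositive $(1,1)$-form of rank one and hence nonzero. The key point is that $\gamma$ is \emph{exact} on $X$: a direct computation gives
\begin{equation*}
\gamma=d\!\left(\frac{dx_1}{y_1}\right),
\end{equation*}
and the $1$-form $\tfrac{dx_1}{y_1}$ descends to $X$. Indeed, since $\sigma_1$ is a real embedding, the translation $T_a$ sends $x_1\mapsto x_1+\sigma_1(a)$ and fixes $y_1$, while the rotation $R_u$ scales both $x_1$ and $y_1$ by the positive real number $\sigma_1(u)$; in either case $\tfrac{dx_1}{y_1}$ is invariant, so it is well defined on the quotient $X(K,U)$. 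Thus $\gamma$ is an exact, semipositive, nonzero $(1,1)$-form on $X$, exactly of the type whose existence obstructs balanced metrics (compare Michelsohn's current-theoretic criterion).

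Combining the two observations yields the theorem: for any balanced $\Omega$, Stokes gives $\int_X\gamma\wedge\Omega^{n-1}=0$, while pointwise positivity gives $\int_X\gamma\wedge\Omega^{n-1}>0$. This contradiction applies to every OT manifold, since the standing hypothesis $s,t\ge 1$ guarantees at least one $\mathbb{H}$-factor carrying the form $\gamma$. I expect the only genuinely delicate points to be the invariance computation showing that $\tfrac{dx_1}{y_1}$ descends to the compact quotient, and the pointwise positivity $\gamma\wedge\Omega^{n-1}>0$, which rests on the linear algebra of semipositive $(1,1)$-forms paired against a strictly positive $(n-1,n-1)$-form; both are routine, but are where the argument must be pinned down carefully.
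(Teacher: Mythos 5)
Your argument is correct, and it proves the theorem by a genuinely different route than the paper. The paper proceeds by reduction to invariant forms: it invokes the Fino--Grantcharov averaging result (Lemma \ref{invbal}) to replace an arbitrary closed positive $(n-1,n-1)$-form by a left-invariant one, expands that form in the coframe $\{\omega_i,\gamma_j\}$, and uses the structure equations \eqref{ecstruct} to show that closedness would force the diagonal coefficients $a_{i\overline{i}}$, $1\le i\le s$, to vanish, contradicting positivity. You instead produce an explicit exact, semipositive, nowhere-vanishing $(1,1)$-form: writing $w_1=x_1+\mathrm{i}\,y_1$, you take $\gamma=\frac{\mathrm{i}}{2}\,\frac{dw_1\wedge d\overline{w}_1}{(\Im w_1)^2}=d\left(\frac{dx_1}{y_1}\right)$ and integrate it against $\Omega^{n-1}$: Stokes gives zero, while the trace identity $\gamma\wedge\Omega^{n-1}=\frac{1}{n}(\tr_\Omega\gamma)\,\Omega^n$ gives a strictly positive integral. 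The steps you flagged as delicate do check out: $d(dx_1/y_1)=dx_1\wedge dy_1/y_1^2$; the primitive $dx_1/y_1=\frac{1}{2}(\omega_1+\overline{\omega}_1)$ is invariant under $U\ltimes\mathcal{O}_K$ because translations shift $x_1$ by the real number $\sigma_1(a)$ while rotations scale $x_1$ and $y_1$ by the same positive real $\sigma_1(u)$ (total positivity of $u$ is what makes this work); and the pointwise positivity is standard linear algebra. Your approach is more elementary and self-contained: it needs neither the averaging lemma nor the solvmanifold structure, only that $s\ge 1$, and it is precisely Michelsohn's obstruction (a nonzero exact positive $(1,1)$-form, being a component of a boundary, forbids balanced metrics). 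What the paper's computation buys in exchange is reusable machinery: the same coframe calculus drives the pluriclosed characterization and, via the explicit formula for $d\Omega_0$, the construction of the locally conformally balanced metric in Theorem \ref{lcb}. Note also that the two proofs are ultimately grounded in the same fact, since your $\gamma$ is $d$ of $\Re\omega_1$, i.e., a consequence of the structure equation $d\omega_1=\frac{\mathrm{i}}{2}\omega_1\wedge\overline{\omega}_1$.
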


\begin{theorem}
Any OT manifold $X(K, U)$ carries a locally conformally balanced metric.
\end{theorem}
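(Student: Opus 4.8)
The plan is to exhibit a single explicit Hermitian metric on the universal cover $\mathbb{H}^s\times\mathbb{C}^t$ that is invariant under the full group $U\ltimes\mathcal{O}_K$, hence descends to $X(K,U)$, and then to check that its associated Lee form $\theta$ (defined by $d\Omega^{\,n-1}=\theta\wedge\Omega^{\,n-1}$, $n=s+t$) is closed. Writing the coordinates as $(w_1,\dots,w_s,z_1,\dots,z_t)$ with $w_j\in\mathbb{H}$ and $y_j:=\mathrm{Im}\,w_j>0$, I would try the ``diagonal'' ansatz
\[
\Omega=\frac{i}{2}\sum_{j=1}^{s}\frac{dw_j\wedge d\bar w_j}{y_j^{2}}+\frac{i}{2}\sum_{k=1}^{t}\Big(\prod_{j=1}^{s}y_j^{\,a_{kj}}\Big)\,dz_k\wedge d\bar z_k,
\]
that is, the product of the Poincar\'e forms $\phi_j$ on the $\mathbb{H}$-factors with flat forms $\psi_k$ on the $\mathbb{C}$-factors weighted by monomials in the $y_j$. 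The translations $T_a$ act trivially on the coefficients (these depend only on the $y_j$, which are translation-invariant), so the only genuine constraint comes from $R_u$-invariance.

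The second step is to pin down the exponents $a_{kj}$. Under $R_u$ one has $y_j\mapsto\sigma_j(u)y_j$ and $dz_k\wedge d\bar z_k\mapsto|\sigma_{s+k}(u)|^{2}\,dz_k\wedge d\bar z_k$, so $\Omega$ is $R_u$-invariant precisely when $\sum_{j=1}^{s}a_{kj}\log\sigma_j(u)=-2\log|\sigma_{s+k}(u)|$ for every $u\in U$ and every $k$. Since $U$ is free of rank $s$ and $pr_{\mathbb{R}^s}\circ l(U)$ is a full lattice, the $s\times s$ matrix $(\log\sigma_j(u_m))$ formed from generators $u_1,\dots,u_s$ of $U$ is invertible, so this linear system has a unique real solution $a_{kj}$. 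As the Poincar\'e forms are positive and the weights $\prod_j y_j^{\,a_{kj}}>0$, the resulting $\Omega$ is a genuine $(U\ltimes\mathcal{O}_K)$-invariant Hermitian metric, and hence descends to $X(K,U)$.

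The third step is the computation of the Lee form. Each $\phi_j$ is closed on $\mathbb{H}^s\times\mathbb{C}^t$, while $d\psi_k=df_k\wedge\psi_k$ with $f_k=\sum_j a_{kj}\log y_j$. Expanding $\Omega^{\,n-1}=(n-1)!\big(\sum_j\widehat{\phi_j}+\sum_k\widehat{\psi_k}\big)$, with hats denoting omission of one factor from the top product, and using $dy_j\wedge(dw_j\wedge d\bar w_j)=0$, I expect only the $\widehat{\phi_j}$-terms to survive, yielding
\[
d\Omega^{\,n-1}=(n-1)!\sum_{j=1}^{s}\Big(\sum_{k=1}^{t}a_{kj}\Big)\frac{dy_j}{y_j}\wedge\widehat{\phi_j}=\theta\wedge\Omega^{\,n-1},\qquad \theta:=\sum_{j=1}^{s}\Big(\sum_{k=1}^{t}a_{kj}\Big)\,d\log y_j.
\]
Since $\theta$ has constant coefficients and each $d\log y_j$ is exact, hence closed, and since $\theta$ is $T_a$- and $R_u$-invariant, it descends to a closed $1$-form on $X(K,U)$, so $\Omega$ is locally conformally balanced.

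The main obstacle is twofold. First, one must guarantee that the weights $a_{kj}$ exist: this is exactly where the OT lattice condition enters, and it is indispensable, for without the full-rank property of $pr_{\mathbb{R}^s}\circ l(U)$ the invariance equations would be unsolvable. Second, one must organize the exterior-derivative bookkeeping so that all $\widehat{\psi_k}$-terms and all off-diagonal ($i\ne j$) cross-terms vanish, leaving a Lee form lying in the constant span of the closed forms $d\log y_j$. This last point is the crux: it is precisely the fact that $\theta$ is a constant combination of the $d\log y_j$ that makes it closed, and that the coefficients $\sum_k a_{kj}$ do not generically vanish which forces the metric to be only locally conformally balanced rather than balanced, consistently with the preceding theorem.
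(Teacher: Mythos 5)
Your proof is correct, and in fact the metric you construct is literally the paper's: solving your invariance equations gives $a_{kj}=-b_{jk}$, where $(b_{ki})$ are the structure constants from Kasuya's solvmanifold presentation recalled in the preliminaries, so your $\Omega$ coincides with the paper's
\begin{equation*}
\omega_0=\sum_{i=1}^s\frac{dw_i\wedge d\overline{w}_i}{(\mathrm{Im}\,w_i)^2}+\sum_{i=1}^t\prod_{k=1}^s(\mathrm{Im}\,w_k)^{-b_{ki}}\,dz_i\wedge d\overline{z}_i,
\end{equation*}
and your Lee form $\theta=\sum_j\bigl(\sum_k a_{kj}\bigr)d\log y_j$ agrees (up to sign conventions) with the paper's $\theta_0=-\sum_i\frac{\omega_i-\overline{\omega}_i}{2\mathrm{i}}$, since $\frac{\omega_i-\overline{\omega}_i}{2\mathrm{i}}=d\log(\mathrm{Im}\,w_i)$. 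Moreover, the relation $\sum_i b_{ki}=-1$ (the norm-one condition on units) shows that each of your constants $\sum_k a_{kj}$ equals $1$, so your cautious remark that they ``do not generically vanish'' can be sharpened to ``never vanish''. The one structural difference is the order of operations: the paper stays in the left-invariant coframe $\{\omega_k,\gamma_i\}$, shows that the $(n-1,n-1)$-form $\Omega_0=\mathrm{i}^{n-1}\sum_i m_{i\overline{i}}$ satisfies $d\Omega_0=\theta_0\wedge\Omega_0$ with $\theta_0$ closed, and only then extracts a genuine metric via Michelsohn's root lemma \cite[Lemma 4.8]{Mic}, whereas you write down the $(1,1)$-form directly in coordinates and compute $d\Omega^{n-1}$ by hand, so you never need Michelsohn's lemma. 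Both verifications hinge on the same two facts --- solvability of the exponent equations coming from the lattice property of $pr_{\mathbb{R}^s}\circ l(U)$, and the vanishing $dy_j\wedge dw_j\wedge d\overline{w}_j=0$, which kills all $\widehat{\psi_k}$-terms and off-diagonal cross-terms --- so your route buys self-containedness, while the paper's buys reuse of the structure equations \eqref{ecstruct} already employed in the pluriclosed and balanced theorems.
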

In Section \ref{sectionSKT} we shall present the cohomological implications of Theorem \ref{thmSKT}, as well as the particular case of complex dimension 4, where the two equivalent statements in Theorem \ref{thmSKT} can be reformulated as a topological condition. Moreover, in Subsection \eqref{example}, we give an explicit example of OT manifold of complex dimension 4 carrying a pluriclosed metric, which was communicated to me by Matei Toma.  The recent work \cite{dub20} shows that in arbitrary complex dimension, there exists an admissible pair $(K, U)$ satisfying condition \eqref{SKTcond}, thus providing examples of pluriclosed OT manifolds in any even complex dimension.
They represent rather exotic examples, expanding the list of examples known so far in the literature, which are specific cases of nilmanifolds (see \cite{fn}), solvmanifolds in complex dimension 3 (see \cite{fou}), connected sum of certain product of spheres (see \cite{ggp}), compact Lie groups (see \cite{sst} and \cite{ms} for a detailed proof) and simply connected examples in arbitrary complex dimension arising from A. Swann's twist construction (see \cite{sw10}). 
 A crucial part of our proofs will be played by the solvmanifold structure that all OT manifolds carry and we present it in the next section. 

\section{Preliminaries: The solvmanifold structure}

In \cite{k13}, H. Kasuya showed that all OT manifolds $X(K, U)$ can be organized as solvmanifolds, meaning they are quotients of a  solvable Lie group $G$ to a discrete subgroup of maximal rank $\Gamma$ acting on $G$ by left multiplications. Moreover, $G$ is endowed with a complex structure $J$ which is left-invariant (i. e. invariant to left multiplications with any element $g \in G$). 
We briefly present Kasuya's construction, which consists of organizing $\mathbb{H}^s \times \mathbb{C}^t$ as a solvable Lie group and regarding $U \ltimes \mathcal{O}_K$ as a discrete subgroup. 

Since $pr_{\mathbb{R}^s} \circ l(U)$ is a lattice of rank $s$ in $\mathbb{R}^s$, there exist real numbers $b_{ki}$, $1 \leq k \leq s$, $1 \leq i \leq t$ such that for any $u \in U$:
\begin{equation*}
    2 \mathrm{log}\,|\sigma_{s+i}(u)|=\sum^s_{k=1} b_{ki}\,\mathrm{log}\, \sigma_k(u),
\end{equation*}
or equivalently,
\begin{equation*}
    |\sigma_{s+i}(u)|^2=\prod^s_{k=1} \left(\sigma_k(u)\right)^{b_{ki}}.
\end{equation*}
Moreover, there exist real numbers $c_{ki}$ for any $1 \leq k \leq s$ and $1 \leq i \leq t$ such that 
\begin{equation*}
    \sigma_{s+i}(u)=\left(\prod^s_{k=1} \left(\sigma_k(u)\right)^{\tfrac{b_{ki}}{2}} \right) e^{\mathrm{i}\sum^s_{k=1}c_{ki}\mathrm{log}\, \sigma_{k}(u)}
\end{equation*}
We shall endow $\mathbb{H}^s \times \mathbb{C}^t$ with a group structure. To this aim we define for every $1 \leq i \leq t$ the following one-dimensional representation of $\mathbb{R}^s_+$:
\begin{equation*}
    \rho_i: \mathbb{R}_{+}^s \rightarrow \mathbb{C}
\end{equation*}
\begin{equation*}
    \rho_i (x_1, \ldots, x_s):=x_1^{\tfrac{b_{1i}}{2}}\ldots x_s^{\tfrac{b_{si}}{2}}e^{\mathrm{i}\sum^s_{k=1}c_{ki}\mathrm{log}x_k}.
\end{equation*}
Then we define for any two elements in $\mathbb{H}^s \times \mathbb{C}^t$, $(w,z)=(w_1, \ldots, w_s, z_1, \ldots z_t)$ and $(w', z')=(w'_1, \ldots, w'_s, z'_1, \ldots z'_t)$:
\begin{equation*}
    (w, z) * (w', z')=(w^1, \ldots, w^s, z^1, \ldots, z^t), 
\end{equation*}
where 
\begin{align*}
    w^i & =\mathrm{Re}\, w_i+ \mathrm{Im}\, w_i \cdot \mathrm{Re}\, w'_i + \mathrm{i}\, \mathrm{Im} w_i \cdot \mathrm{Im} w'_i, \qquad 1 \leq i \leq s\\
    z^i & =z_i+ \rho_i(\mathrm{Im}\, w_1, \ldots,\mathrm{Im}\, w_s)z'_i, \qquad 1 \leq i \leq t.
\end{align*}
It is clear that left multiplication with any $(w, z)$ is a biholomorphism with respect to the standard complex structure of $\mathbb{H}^s \times \mathbb{C}^t$ and that $U \ltimes \mathcal{O}_K$ is a discrete subgroup of $\mathbb{H}^s \times \mathbb{C}^t$ by the correspondence:
\begin{equation*}
    (u, a) \mapsto \left( \sigma_1(a)+\mathrm{i}\sigma_1(u), \ldots, \sigma_s(a)+\mathrm{i}\sigma_s(u), \sigma_{s+1}(a), \ldots, \sigma_{s+t}(a) \right).
\end{equation*}
One can easily verify that the following $(1, 0)$ forms are left-invariant and represent a co-frame for $\mathfrak{g}^{1,0}$:
\begin{equation*}
\left\{
    \begin{array}{ll}
        \omega_k:=\frac{dw_k}{\mathrm{Im}\, w_k}, 1 \leq k \leq s  \\
        \gamma_i:= \left(\prod^{s}_{k=1}\left(\mathrm{Im}\, w_k\right)^{-\frac{b_{ki}}{2}}\right)e^{-\mathrm{i}\sum^s_{k=1}c_{ki}\mathrm{log}(\mathrm{Im}\, w_k)}dz_i, 1 \leq i \leq t. 
    \end{array}
\right.
\end{equation*}
Moreover, a straightforward computation gives the following equations satisfied by the co-frame:
\begin{equation}\label{ecstruct}
\left\{
    \begin{array}{ll}
        d\omega_k= \tfrac{\mathrm{i}}{2}\omega_k \wedge \overline{\omega}_k, \qquad 1 \leq k \leq s  \\
        d\gamma_i= \sum^s_{k=1} \left( \frac{\mathrm{i}}{4}b_{ki}-\frac{1}{2}c_{ki} \right)\omega_k \wedge \gamma_i + \sum^s_{k=1} \left( -\frac{\mathrm{i}}{4}b_{ki}+\frac{1}{2}c_{ki} \right)\overline{\omega}_k \wedge \gamma_i, \qquad 1 \leq i \leq t. 
    \end{array}
\right.
\end{equation}

\section{Pluriclosed metrics}\label{sectionSKT}

A Hermitian metric $\Omega$ is called pluriclosed (strongly K\" ahler with torsion) if $dd^c\Omega=0$ (see \cite{bis}), where our convention is $d^c:=-J^{-1}dJ$. Equivalently, $\Omega$ is pluriclosed if $\partial\overline{\partial}\Omega=0$.
In \cite{fkv15}, it was shown that OT manifolds of type $(s, 1)$ do not carry pluriclosed metrics. We shall prove a general statement for all OT manifolds and characterize numerically the existence of pluriclosed metrics. The result of Fino-Kasuya-Vezzoni can then easily be obtained as a corollary. We need first the following lemma due to Ugarte: 

\begin{lemma}\label{inv}
If $X(K, U)$ admits a pluriclosed metric, then it also carries a left- invariant pluriclosed metric.
\end{lemma}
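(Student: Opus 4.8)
The plan is to use a symmetrization argument with respect to the natural measure on the compact solvmanifold $X(K,U) = \Gamma \backslash G$. The key observation is that the pluriclosed condition $\partial\overline\partial\Omega = 0$ is a linear condition on $\Omega$, so if I can produce a left-invariant Hermitian form by averaging an arbitrary pluriclosed metric over $G$ in a way that preserves the equation, I will be done. Concretely, I would start from an arbitrary pluriclosed metric $\Omega$ on $X(K,U)$, lift it to a $\Gamma$-invariant form $\tilde\Omega$ on $G$, and then average $\tilde\Omega$ against a bi-invariant (or at least right-invariant) Haar-type measure to obtain a left-invariant $2$-form $\Omega_0$. The two things to verify are that $\Omega_0$ remains positive (hence a genuine Hermitian metric) and that it remains pluriclosed.

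First I would set up the averaging precisely. Since $G$ acts on itself by left multiplication and these are biholomorphisms for the left-invariant complex structure $J$ (as noted in the construction of the co-frame $\{\omega_k,\gamma_i\}$), pulling back $\tilde\Omega$ by left translations $L_g$ and integrating $g$ over the compact quotient produces a form whose value is invariant under all left translations. The crucial point making the integral well-defined and the output left-invariant is that $G$, being the solvable group underlying an OT manifold, is unimodular; I would confirm unimodularity from the structure equations~\eqref{ecstruct}, or cite that the existence of a lattice $\Gamma$ of maximal rank forces $G$ to be unimodular, so that a bi-invariant Haar measure descends to $\Gamma\backslash G$ and the symmetrization map on forms is well-defined.

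Next I would check the two properties are preserved. Positivity is immediate: the symmetrized form $\Omega_0$ is an average of positive $(1,1)$-forms $L_g^*\tilde\Omega$ (each positive because $L_g$ is a $J$-holomorphic diffeomorphism), and a convex combination of Hermitian metrics is again a Hermitian metric, so $\Omega_0 > 0$. For the pluriclosed condition, since each $L_g$ is holomorphic it commutes with $\partial$ and $\overline\partial$, whence $\partial\overline\partial(L_g^*\tilde\Omega) = L_g^*(\partial\overline\partial\tilde\Omega) = 0$; because $\partial\overline\partial$ is a continuous linear operator it commutes with the integration over $g$, giving $\partial\overline\partial\Omega_0 = 0$. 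Thus $\Omega_0$ is a left-invariant pluriclosed metric, descending to $X(K,U)$ since $\Gamma \subset G$ and left-invariant forms are automatically $\Gamma$-invariant.

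The main obstacle I anticipate is justifying that symmetrization genuinely lands in the space of \emph{left-invariant} forms rather than merely $G$-averaged ones, and this hinges entirely on unimodularity of $G$; without a bi-invariant measure the integral of the left-translates need not be left-invariant. I would therefore treat the verification of unimodularity as the technical heart of the argument. Everything else---positivity via convexity and preservation of $\partial\overline\partial$ via holomorphicity of left translations---is formal once the correct invariant integration is in place.
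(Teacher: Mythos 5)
Your proposal is correct and follows essentially the same route as the paper: the paper's proof is precisely the averaging argument against a bi-invariant volume form, whose existence it gets from Milnor's theorem (a simply connected Lie group with a co-compact discrete subgroup is unimodular), outsourcing the details of the symmetrization to Belgun and to Ugarte's Proposition 4.1. You have simply written out what those citations contain -- including correctly identifying unimodularity as the crux, which indeed can also be checked from the structure equations \eqref{ecstruct} using $\sum_{i=1}^{t} b_{ki}=-1$.
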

\begin{proof} Following a classical result of Milnor (\cite{mil76}), if $G$ is a simply connected Lie group admitting a co-compact discrete subgroup, then it admits also a bi-invariant volume form $d\mu$. Building on the same ideas as in \cite{belgun}, by an averaging procedure using the bi-invariant volume form, the proof of \cite[Proposition 4.1]{u07} concludes that once a compact solvmanifold $\Gamma \backslash G$ carries a pluriclosed metric, it also admits a left-invariant pluriclosed metric.
\end{proof}

\begin{theorem}\label{SKT}
Let X(K, U) be any OT manifold of type $(s, t)$. The following are equivalent:
\begin{enumerate}
    \item $X(K, U)$ admits a pluriclosed metric
    \item $s \leq t$ and after possibly relabeling the embeddings, $|\sigma_{s+i}(u)|^2\sigma_i(u)=1$, for any $u \in U$, and for any $1 \leq i \leq s$ and $|\sigma_{s+i}(u)|=1$, for any $u \in U$, and any $i > s$.
\end{enumerate}
\end{theorem}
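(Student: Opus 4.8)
The plan is to invoke Lemma \ref{inv} to reduce to left-invariant metrics and then run the whole argument on the Lie algebra $\mathfrak{g}$, using the co-frame $\{\omega_k,\gamma_i\}$ and the structure equations \eqref{ecstruct}. A left-invariant Hermitian metric is just a constant positive-definite Hermitian form $\Omega=\mathrm{i}\sum H_{\alpha\bar\beta}\,\eta_\alpha\wedge\bar\eta_\beta$, where $\eta_\alpha$ runs over $\omega_1,\dots,\omega_s,\gamma_1,\dots,\gamma_t$. First I would record the $(2,0)$- and $(1,1)$-parts of $d$ on the co-frame read off from \eqref{ecstruct}: writing $A_{ki}=\tfrac{\mathrm{i}}{4}b_{ki}-\tfrac12 c_{ki}$ one has $\partial\gamma_i=\sum_k A_{ki}\,\omega_k\wedge\gamma_i$, $\bar\partial\gamma_i=-\sum_k A_{ki}\,\bar\omega_k\wedge\gamma_i$, while $\partial\omega_k=0$ and $\partial\bar\omega_k=\tfrac{\mathrm{i}}{2}\omega_k\wedge\bar\omega_k$.

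The heart of the forward implication is a single family of components of $\partial\bar\partial\Omega$. I would compute $\partial\bar\partial(\gamma_i\wedge\bar\gamma_i)$ and observe that $\bar\partial(\gamma_i\wedge\bar\gamma_i)=-\tfrac{\mathrm{i}}{2}\sum_k b_{ki}\,\bar\omega_k\wedge\gamma_i\wedge\bar\gamma_i$, so that the coefficient of $\omega_k\wedge\bar\omega_k\wedge\gamma_i\wedge\bar\gamma_i$ in $\partial\bar\partial(\gamma_i\wedge\bar\gamma_i)$ equals $\tfrac14 b_{ki}(1+b_{ki})$. The key bookkeeping point, which I would verify by inspecting the remaining blocks, is that neither the $\omega\wedge\bar\omega$ block, nor the mixed $\omega\wedge\bar\gamma$ block, nor the off-diagonal terms $\gamma_i\wedge\bar\gamma_j$ with $i\neq j$ can produce a summand of type $\omega_k\wedge\bar\omega_k\wedge\gamma_i\wedge\bar\gamma_i$, because applying $\partial$ and $\bar\partial$ to a form containing no holomorphic $\gamma_i$ never creates one. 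Hence this component of $\partial\bar\partial\Omega$ equals $\mathrm{i}\,H_{\gamma_i\bar\gamma_i}\cdot\tfrac14 b_{ki}(1+b_{ki})$, and since positive-definiteness forces $H_{\gamma_i\bar\gamma_i}>0$, the SKT condition yields $b_{ki}(1+b_{ki})=0$, i.e. $b_{ki}\in\{0,-1\}$ for all $k,i$; the components $\omega_l\wedge\bar\omega_k\wedge\gamma_i\wedge\bar\gamma_i$ with $l\neq k$ give in the same way $b_{ki}b_{li}=0$, so each column of $(b_{ki})$ has at most one nonzero entry.

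I would then bring in arithmetic. Since the units of $U$ have norm of modulus one, $l(U)$ lies in the trace-zero hyperplane, which translates into $\sum_{i=1}^t b_{ki}=-1$ for every $k$ (using that the vectors $(\log\sigma_1(u),\dots,\log\sigma_s(u))$ span $\mathbb{R}^s$). Combined with $b_{ki}\in\{0,-1\}$ this forces exactly one $-1$ in each row; together with at most one nonzero per column, the matrix $(b_{ki})$ is minus a partial permutation, giving an injection of the $s$ real places into the $t$ complex places, hence $s\le t$. The passage from $s\le t$ to the equality $s=t$, equivalently the statement that no complex place can be trivial on an admissible $U$ so that the partial permutation is onto, is the step I expect to be the main obstacle, since the metric computation by itself tolerates empty columns (a column of zeros makes $\gamma_i\wedge\bar\gamma_i$ closed); here I would argue from the specific structure of admissible $U$ and the representations $\rho_i$, an empty column forcing $|\rho_i|\equiv 1$. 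Once $s=t$ is known the injection is a bijection, and after relabeling the complex embeddings one gets $b_{ki}=-\delta_{ki}$; since $|\sigma_{s+i}(u)|^2=\prod_k\sigma_k(u)^{b_{ki}}$, this is precisely the arithmetic condition \eqref{SKTcond}.

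For the converse I would simply exhibit a metric. Assuming $s=t$ and $b_{ki}=-\delta_{ki}$, take $\Omega_0=\tfrac{\mathrm{i}}{2}\sum_{k}\omega_k\wedge\bar\omega_k+\tfrac{\mathrm{i}}{2}\sum_{i}\gamma_i\wedge\bar\gamma_i$. A direct check gives $\partial\bar\partial(\omega_k\wedge\bar\omega_k)=0$, while with $b_{ki}=-\delta_{ki}$ one finds $\bar\partial(\gamma_i\wedge\bar\gamma_i)=\tfrac{\mathrm{i}}{2}\bar\omega_i\wedge\gamma_i\wedge\bar\gamma_i$ and then that $\partial$ of this term vanishes (the contributions from $\partial\bar\omega_i$ and from $A_{ii}-\bar A_{ii}=\tfrac{\mathrm{i}}{2}b_{ii}$ cancel), so $\Omega_0$ is pluriclosed. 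This closes the equivalence; the genuinely delicate point remains the arithmetic upgrade from $s\le t$ to $s=t$, the rest being a controlled but lengthy $\partial\bar\partial$ computation.
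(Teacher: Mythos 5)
Your route is the paper's own: reduce to a left-invariant metric via Lemma \ref{inv}, compute $\partial\overline{\partial}$ in the coframe \eqref{ecstruct}, and turn the result into arithmetic on the matrix $(b_{ki})$. The computational half of your argument is correct, and in one respect more complete than what the paper displays: the paper records only the diagonal coefficients, $i_{\gamma_i^*}i_{\overline{\gamma}_i^*}dJd\Omega=-a_{s+i\overline{s+i}}\sum_k b_{ki}(b_{ki}+1)\,\omega_k\wedge\overline{\omega}_k$, hence only $b_{ki}\in\{0,-1\}$, whereas you also set the cross terms $b_{ki}b_{li}$ ($k\neq l$) to zero, which is what genuinely forbids two entries $-1$ in the same column; your degree-counting observation that no other block of $\Omega$ can feed the components $\omega_l\wedge\overline{\omega}_k\wedge\gamma_i\wedge\overline{\gamma}_i$ is sound. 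The row relation $\sum_{i=1}^t b_{ki}=-1$ is the same one the paper invokes, and your converse metric is exactly the paper's metric \eqref{metricaSKT} rewritten in the invariant coframe, so that direction is fine.

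The genuine gap is the one you flagged yourself: you only reach $s\le t$, i.e. $(b_{ki})$ is minus a partial permutation, and you never exclude ``empty columns'', that is, complex places with $|\sigma_{s+i}(u)|=1$ for all $u\in U$. This is not a minor loose end: an empty column makes $\gamma_i\wedge\overline{\gamma}_i$ closed, so all the metric-side conditions you derived are compatible with $s<t$, and the implication $(1)\Rightarrow(2)$ stands or falls with this point. Moreover, your proposed remedy is circular: ``an empty column forces $|\rho_i|\equiv1$'' is a restatement of the condition, not a contradiction, because admissibility of $U$ only constrains the projection of $l(U)$ onto the first $s$ coordinates and says nothing by itself about the coordinate $s+i$. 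For comparison, the paper closes this step by asserting that every column contains exactly one $-1$, say in row $k_i$, so that $\sigma_{k_i}(u)|\sigma_{s+i}(u)|^2=1$, and then uses the norm relation $\prod_k\sigma_k(u)\prod_i|\sigma_{s+i}(u)|^2=1$ together with the absence of nontrivial multiplicative relations among $\sigma_1(u),\dots,\sigma_s(u)$ to force $i\mapsto k_i$ to be a bijection, whence $s=t$; note, however, that an empty column would contribute only a factor $1$ to the norm relation, which then reduces to a trivially satisfied identity, so the crux really is the ``exactly one per column'' claim, and your suspicion that an extra argument is needed there is well founded. One way to actually supply it: if $|\sigma_{s+i}(u)|=1$ for all $u\in U$, then for $u\neq1$ the number $\sigma_{s+i}(u)$ is a non-real conjugate of $u$ on the unit circle, so the minimal polynomial of $u$ agrees up to sign with its reciprocal and inversion permutes the conjugates of $u$; applying this to all $u$ in a suitable finite-index subgroup of $U$ yields a nontrivial linear relation among $\log\sigma_1(u),\dots,\log\sigma_s(u)$, contradicting admissibility. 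Without an input of this kind, your proof of $(1)\Rightarrow(2)$ is incomplete precisely at the step that carries the arithmetic content of the theorem.
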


\begin{proof} 
For the implication $(2) \Rightarrow (1)$, we consider the following metric on $\mathbb{H}^s \times \mathbb{C}^t$:
\begin{equation}\label{metricaSKT}
    \tilde{\Omega}:=\sum^s_{i=1} \mathrm{i}\frac{dw_i \wedge d\overline{w}_i}{\left(\mathrm{Im} w_i\right)^2}+ \mathrm{Im} w_i \mathrm{i}dz_i \wedge d\overline{z}_i + \sum_{i>s} \mathrm{i} dz_i \wedge d\overline{z}_i,
\end{equation}
which can easily be verified as being $dd^c$-closed and $U \ltimes \mathcal{O}_K$-invariant, therefore it descends to $X(K, U)$.
For $(1) \Rightarrow (2)$, by Lemma \ref{inv} we can assume $\Omega$ is a left-invariant pluriclosed metric on $X(K, U)$. Then we can write
\begin{equation*}
    \Omega=\Omega_0 + \Omega_{01}+\Omega_1,
\end{equation*}
where 
\begin{equation*}
    \Omega_0:= \sum^{s}_{i,j=1} a_{i\overline{j}} \mathrm{i}\omega_i \wedge \overline{\omega}_j
\end{equation*}
\begin{equation*}
    \Omega_{01}:=\sum_{\substack{1 \leq i \leq s\\1 \leq j \leq t}} a_{i\overline{s+j}}\mathrm{i}\omega_i \wedge \overline{\gamma}_j+ \sum_{\substack{1 \leq i \leq t\\1 \leq j \leq s}} a_{s+i\overline{j}}\mathrm{i}\gamma_i \wedge \overline{\omega}_{j}
\end{equation*}
\begin{equation*}
    \Omega_1:= \sum^t_{i, j=1} a_{s+i\overline{s+j}} \mathrm{i} \gamma_i \wedge \overline{\gamma}_j,
\end{equation*}
and $\left(a_{i\overline{j}}\right)_{1 \leq i, j \leq n}$ is a positive Hermitian matrix. Using now the equations \eqref{ecstruct}, we easily get that for any $1 \leq i \leq t$,

\begin{equation*}
i_{\gamma_i^*}i_{\overline{\gamma}^*_i}dJd\Omega_0=0,
\end{equation*}
\begin{equation*}
i_{\gamma_i^*}i_{\overline{\gamma}^*_i}dJd\Omega_{01}=0,
\end{equation*}
and therefore, 
\begin{equation*}
    i_{\gamma_i^*}i_{\overline{\gamma}^*_i}dJd\Omega = i_{\gamma_i^*}i_{\overline{\gamma}^*_i}dJd\Omega_{1}=-a_{s+i\overline{s+i}}\sum^s_{k=1}b_{ki}(b_{ki}+1)\omega_k \wedge \overline{\omega}_k -a_{s+i\overline{s+i}}\sum^s_{k\neq l}b_{ki}b_{li}\omega_l \wedge \overline{\omega}_k.
\end{equation*}
Since $dJd\Omega=0$ and $a_{s+i\overline{s+i}}>0$, we deduce that for any $1 \leq k \leq s$ and $1 \leq i \leq t$, we have $b_{ki} \in \{0, -1\}$ and $b_{ki}b_{li}=0$, for any $k \neq l$. However, by the construction of $X(K, U)$, $(b_{ki})_{ki}$ satisfy 
\begin{equation}
    \sum^t_{i=1}b_{ki}=-1, \qquad \forall 1 \leq k \leq s,
\end{equation}
which combined with $b_{ki} \in \{0, -1\}$ and $b_{ki}b_{li}=0$, for any $k \neq l$ gives that for any $1 \leq k \leq s$, there exists $1 \leq i_k \leq t$ such that $b_{ki_k}=-1$, $b_{ki}=0$, for $i \neq i_k$, and $i_k \neq i_l$ for $k \neq l$. In terms of embeddings, this translates in the following equality valid for any $u \in U$:
\begin{equation*}
    \sigma_{k}(u)|\sigma_{s+i_k}(u)|^2=1, \qquad 1 \leq k \leq s, \qquad |\sigma_{s+j}(u)|=1, \forall j \neq i_k. 
\end{equation*}
Finally, after a relabeling, we have $\sigma_i(u)|\sigma_{s+i}(u)|^2=1$, for any $1 \leq i \leq s$ and $|\sigma_{s+i}(u)|=1$, for any $i > s$.
\end{proof}

\begin{proposition}\label{cohomology}
An Oeljeklaus-Toma manifold $X(K, U)$ of type $(s, s)$ admitting a pluriclosed metric has the following topological and complex properties:
\begin{enumerate}
    \item The third Betti number $b_3(X(K, U))= {s \choose 3} + s$.
    \item $\mathrm{dim}_{\mathbb{C}}H^{2,1}_{\overline{\partial}}(X)=s$
\end{enumerate}

\end{proposition}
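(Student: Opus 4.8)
The plan is to exploit the rigid structure of the Lie algebra forced by Theorem \ref{SKT}. By the implication $(1)\Rightarrow(2)$, a pluriclosed metric forces $s=t$ and, after relabeling, $b_{ki}=-\delta_{ki}$. Substituting this into \eqref{ecstruct}, the structure equations collapse to
\begin{equation*}
d\omega_k=\tfrac{\mathrm{i}}{2}\omega_k\wedge\overline{\omega}_k,\qquad d\gamma_i=\eta_i\wedge\gamma_i,\qquad \eta_i:=\tfrac{1}{2}\beta_i-\mathrm{i}\sum_{k=1}^{s}c_{ki}\beta_k,
\end{equation*}
where $\beta_k:=\Im\omega_k$ is a closed invariant $1$-form and $\alpha_k:=\Re\omega_k$ satisfies $d\alpha_k=\alpha_k\wedge\beta_k$. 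Since $\eta_i+\overline{\eta}_i=\beta_i$, one obtains $d(\gamma_i\wedge\overline{\gamma}_i)=\beta_i\wedge\gamma_i\wedge\overline{\gamma}_i$, an identity I will use repeatedly.

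First I would reduce both statements to finite-dimensional linear algebra. Using the descriptions of the de Rham and Dolbeault cohomology of OT manifolds through left-invariant forms (\cite{IO}, \cite{aOmT}, \cite{k20}), it suffices to compute $H^{3}$ of the Chevalley--Eilenberg complex $(\wedge^{\bullet}\mathfrak{g}^{*},d)$ and $H^{2,1}$ of its Dolbeault analogue $(\wedge^{\bullet,\bullet},\overline{\partial})$, keeping only the classes representable by forms invariant under the maximal compact subgroup of $G$ (a real torus acting by rotations on the coordinates $z_i$).

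For $(1)$, I would exhibit two explicit families of closed invariant $3$-forms: the $\binom{s}{3}$ monomials $\beta_{k_1}\wedge\beta_{k_2}\wedge\beta_{k_3}$, and the $s$ forms $\mathrm{i}\,\alpha_i\wedge\gamma_i\wedge\overline{\gamma}_i$, the latter being closed because $d\alpha_i\wedge\mathrm{i}\gamma_i\wedge\overline{\gamma}_i$ and $\alpha_i\wedge d(\mathrm{i}\gamma_i\wedge\overline{\gamma}_i)$ both equal $\mathrm{i}\,\alpha_i\wedge\beta_i\wedge\gamma_i\wedge\overline{\gamma}_i$ and cancel. Non-exactness follows from the grading by the number of $\beta_k$-factors, which $d$ raises by exactly one on each generator $\alpha_k,\gamma_i,\overline{\gamma}_i$ and kills on $\beta_k$: the forms $\mathrm{i}\,\alpha_i\wedge\gamma_i\wedge\overline{\gamma}_i$ have $\beta$-degree $0$ and so cannot lie in the image of $d$, while the only degree-$2$ forms of $\beta$-degree $2$ are the (closed) $\beta_a\wedge\beta_b$, so no $\beta_{k_1}\wedge\beta_{k_2}\wedge\beta_{k_3}$ is exact. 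For $(2)$, the analogous representatives are $\omega_i\wedge\gamma_i\wedge\overline{\gamma}_i$, $1\le i\le s$; writing $\overline{\partial}\omega_i=\tfrac{\mathrm{i}}{2}\omega_i\wedge\overline{\omega}_i$, $\overline{\partial}\gamma_i=-\sum_k\lambda_{ki}\overline{\omega}_k\wedge\gamma_i$ and $\overline{\partial}\,\overline{\gamma}_i=\sum_k\overline{\lambda}_{ki}\overline{\omega}_k\wedge\overline{\gamma}_i$ with $\lambda_{ki}=-\tfrac{\mathrm{i}}{4}\delta_{ki}-\tfrac{1}{2}c_{ki}$, a short computation shows the three terms of $\overline{\partial}(\omega_i\wedge\gamma_i\wedge\overline{\gamma}_i)$ cancel, so these forms are $\overline{\partial}$-closed; they are not $\overline{\partial}$-exact because $\overline{\partial}$ of a $(2,0)$-form never creates an $\overline{\gamma}_i$ factor.

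The main obstacle is \emph{completeness}: proving that the displayed classes actually \emph{span} $H^{3}$ and $H^{2,1}$, rather than merely giving linearly independent but possibly incomplete families. This is the true content and reduces to a finite but somewhat lengthy verification over the whole Chevalley--Eilenberg complex, namely that every torus-invariant closed $3$-form, respectively $\overline{\partial}$-closed $(2,1)$-form, is cohomologous to a combination of the representatives above. A secondary delicate point is justifying, through the cited reduction theorems, that the invariant computation recovers the full de Rham and Dolbeault cohomology even though $G$ fails to be completely solvable.
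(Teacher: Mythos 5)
Your explicit representatives and the computations behind them are correct: with $b_{ki}=-\delta_{ki}$ forced by Theorem \ref{SKT}, one indeed has $d(\alpha_i\wedge\gamma_i\wedge\overline{\gamma}_i)=0$ and $\overline{\partial}(\omega_i\wedge\gamma_i\wedge\overline{\gamma}_i)=0$, and together with the standard symmetrization (averaging) argument, your grading considerations give the lower bounds $b_3\geq \binom{s}{3}+s$ and $\mathrm{dim}_{\mathbb{C}}H^{2,1}_{\overline{\partial}}\geq s$. The genuine gap is the step you yourself call ``the true content'': completeness. You propose to settle it by a finite verification inside the Chevalley--Eilenberg complex, after invoking \cite{IO}, \cite{aOmT}, \cite{k20} as descriptions of the cohomology ``through left-invariant forms''. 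Those references assert no such thing, and the reduction is not available: $G$ is not completely solvable, so Hattori/Mostow-type theorems do not apply, and for OT manifolds the de Rham and Dolbeault cohomologies can a priori be strictly larger than the cohomology of left-invariant forms. Concretely, the classes produced by \cite{IO} correspond to multiplicative relations $\sigma_{i_1}(u)\cdots\sigma_{i_m}(u)\equiv 1$ and are represented by $U\ltimes\mathcal{O}_K$-invariant constant-coefficient forms such as $dw_{i}\wedge dz_{j}\wedge d\overline{z}_{k}$ on $\mathbb{H}^s\times\mathbb{C}^t$; rewriting such a form in the co-frame $\omega_k,\gamma_i$ introduces a factor $\prod_k(\mathrm{Im}\,w_k)^{e_k}e^{\mathrm{i}\phi(w)}$, and the form is left-invariant only if the relevant combination of the $c_{ki}$ vanishes identically, whereas the relation only forces that combination to take values in $2\pi\mathbb{Z}$ on $l(U)$. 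Hence there can be cohomology classes invisible to the invariant complex, and no amount of computation inside that complex (nor your further unjustified restriction to ``torus-invariant'' classes) can exclude them.

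This is precisely why the paper's proof never touches the invariant complex: it quotes the counting formulas valid for \emph{every} OT manifold --- $b_3=\binom{s}{3}+s\rho_2+\rho_3$ from \cite[Theorem 3.1]{IO}, and the analogous count \eqref{dolbeault} for Dolbeault cohomology from \cite[Theorem 4.5]{aOmT} --- and then evaluates these counts by pure arithmetic: any relation other than $\sigma_i(u)\sigma_{s+i}(u)\sigma_{2s+i}(u)=1$ would, after combining it with its complex conjugate and substituting the pluriclosed condition \eqref{SKTcond}, yield a nontrivial multiplicative relation among $\sigma_1(u),\ldots,\sigma_s(u)$, which is impossible by the admissibility of $U$; this gives $\rho_2=0$, $\rho_3=s$ and the $(2,1)$-count equal to $s$. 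This arithmetic counting argument is the actual content of the proposition and is entirely absent from your proposal. To repair your proof you would either have to reproduce it (at which point the Lie-algebra computation becomes superfluous), or prove a genuinely new statement, namely that under \eqref{SKTcond} the full de Rham and Dolbeault cohomologies of $X(K,U)$ are computed by left-invariant forms --- which is exactly the point you deferred.
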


\begin{proof}
(1) The third Betti number can be computed via Theorem 3.1 in \cite{IO}, namely:
\begin{equation}\label{betti}
    b_3= {s \choose 3} + s\rho_2 + \rho_3,
\end{equation}
where $\rho_2$ is the cardinal of the set $\{1 \leq i_1 \neq i_2 \leq s+2t \mid \sigma_{i_1}(u)\sigma_{i_2}(u)=1, \forall u \in U\}$ and $\rho_3$ is the cardinal of the set 
\begin{equation*}
    \{ 1\leq i_1, i_2, i_3\leq s+2t, i_1 \neq i_2, i_2 \neq i_3, i_3 \neq i_1 \mid \sigma_{i_1}(u)\sigma_{i_2}(u)\sigma_{i_3}(u)=1, \forall u \in U\}.
\end{equation*}
Firstly we note that $\rho_2=0$ (see also \cite[Proposition 2.4]{apv16}). Indeed, it is clearly impossible to have $\sigma_{i_1}(u)\sigma_{i_2}(u)=1$, for any $u \in U$, if $1 \leq i_1 \leq s$ and $s+1 \leq i_2 \leq 3s$, or if $1 \leq i_1, i_2 \leq s$. If $s+1 \leq i_1, i_2 \leq 3s$, then also $\overline{\sigma}_{i_1}(u)\overline{\sigma}_{i_2}(u)=1$. Using now the pluriclosed condition \eqref{SKTcond}, we get 
\begin{equation}\label{ecc}
    \sigma_{\overline{i}_1}(u)\sigma_{\overline{i}_2}(u)=1, \forall u \in U,
\end{equation}
where $\overline{i}_{1, 2}=i_{1, 2}-s$, if $i_{1, 2}\leq 2s$ and $\overline{i}_{1, 2}=i_{1, 2}-2s$, if $i_{1, 2}> 2s$. But having a nontrivial relation between $\sigma_1(u), \ldots, \sigma_s(u)$ as \eqref{ecc} would imply is impossible, therefore, $\rho_2$ has to vanish. Now we prove that $\rho_3=s$. We clearly have $\rho_3 \geq s$, since for every $1 \leq i \leq s$, $\sigma_i(u)\sigma_{s+i}(u)\sigma_{2s+i}(u)=1$, for any $u \in U$. If there existed any triple $i_1 < i_2 < i_3$ such that $\sigma_{i_1}(u) \sigma_{i_2}(u)\sigma_{i_3}(u)=1$ and $(i_2, i_3) \neq (i_1+s, i_1+2s)$, using the pluriclosed condition and the fact that also $\overline{\sigma}_{i_1}(u) \overline{\sigma}_{i_2}(u)\overline{\sigma}_{i_3}(u)=1$, we obtain a non-trivial relation between $\sigma_1(u), \ldots, \sigma_s(u)$, which is impossible. Consequently, $\rho_3=s$ and $b_3={s \choose 3}+s$.

(2) By the proof of Theorem 4.5 in \cite{aOmT}, we get that in general, for any $X(K, U)$, 
\begin{equation}\label{dolbeault}
    \mathrm{dim}_{\mathbb{C}}H^{p,q}_{\overline{\partial}}=\sum_{i+j=q}{s \choose i}\sharp\{I \subseteq \{1, \ldots, s+t\}, J \subseteq \{1, \ldots, t\} \mid |I|=p, |J|=j, \sigma_{I}(u)\sigma_{\overline{J}}(u) \equiv 1\},
\end{equation}
where for a multi-index $I=(i_1, \ldots, i_k)$, we use the notation $\sigma_I(u):=\sigma_{i_1}(u)\cdot \ldots \cdot \sigma_{i_k}(u)$ and by $\sigma_{\overline{I}}(u):=\sigma_{s+t+i_1}(u)\cdot \ldots \cdot \sigma_{s+t+i_k}(u)$. Using the pluriclosed condition and the fact that there is no non-trivial relation between $\sigma_1(u), \ldots, \sigma_s(u)$, we easily obtain $\mathrm{dim}_{\mathbb{C}}H^{2,1}_{\overline{\partial}}(X)=s$.
\end{proof}

\begin{corollary}
An Oeljeklaus-Toma manifolds $X(K, U)$ admits both a pluriclosed and an lcK metric if and only if it is an Inoue-Bombieri surface.
\end{corollary}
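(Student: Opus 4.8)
The plan is to combine the arithmetic characterizations of the pluriclosed and lcK conditions and show that, when imposed simultaneously, they force $s=t=1$, which is exactly the Inoue--Bombieri case. I would first recall the two number-theoretic conditions already at my disposal: by Theorem~\ref{SKT}, admitting a pluriclosed metric is equivalent to $s=t$ together with (after relabeling) $|\sigma_{s+i}(u)|^2\sigma_i(u)=1$ for all $u\in U$ and all $1\le i\le s$; and by the lcK condition \eqref{lckcond}, admitting an lcK metric is equivalent to $|\sigma_{s+1}(u)|=\cdots=|\sigma_{s+t}(u)|$ for all $u\in U$. The easy direction is immediate: an Inoue--Bombieri surface is an OT manifold of type $(1,1)$, so $s=t=1$, the lcK condition holds automatically (as noted in the text, every type $(s,1)$ manifold is lcK), and the single pluriclosed relation $|\sigma_2(u)|^2\sigma_1(u)=1$ is forced by $u$ being a unit, since $\sigma_1(u)|\sigma_2(u)|^2=\sigma_1(u)\sigma_2(u)\sigma_3(u)=N_{K/\mathbb{Q}}(u)=\pm1$ and totally positive units have norm $1$.

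For the nontrivial direction, I would assume both metrics exist and derive $s=t=1$. From the pluriclosed side we already know $s=t$, so I would feed the relations $|\sigma_{s+i}(u)|^2\sigma_i(u)=1$ into the lcK equality. Writing these as $\sigma_i(u)=|\sigma_{s+i}(u)|^{-2}$ for each $i$, the lcK condition $|\sigma_{s+1}(u)|=\cdots=|\sigma_{s+t}(u)|$ forces all the moduli $|\sigma_{s+i}(u)|$ to coincide, and hence all the real values $\sigma_1(u),\ldots,\sigma_s(u)$ to coincide as well, for every $u\in U$.

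The core of the argument is then to show that this common-value phenomenon is impossible unless $s=1$. If $s\ge2$, then for every $u\in U$ we would have $\sigma_1(u)=\sigma_2(u)$, i.e.\ $\sigma_1(u)\sigma_2(u)^{-1}=1$, a nontrivial multiplicative relation among $\sigma_1(u),\ldots,\sigma_s(u)$ holding identically on $U$. But the defining admissibility property of $U$ is precisely that $pr_{\mathbb{R}^s}\circ l(U)$ is a lattice of rank $s$ in $\mathbb{R}^s$, which is equivalent to the logarithms $\log\sigma_1(u),\ldots,\log\sigma_s(u)$ being $\mathbb{Z}$-linearly independent as functions on $U$; this is exactly the ``no non-trivial relation between $\sigma_1(u),\ldots,\sigma_s(u)$'' fact used repeatedly in the proof of Proposition~\ref{cohomology}. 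A relation $\sigma_1(u)=\sigma_2(u)$ on all of $U$ contradicts this independence, so we must have $s=1$ and therefore $t=1$.

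I expect the main obstacle to be the bookkeeping in translating the lcK modulus equalities into a genuine multiplicative relation among the \emph{real} embeddings $\sigma_1,\ldots,\sigma_s$, rather than merely among the complex ones, since the pluriclosed relations link $\sigma_i$ to $|\sigma_{s+i}|^2$ and one must be careful that the rank-$s$ lattice condition precisely rules out the resulting identity; once that translation is done cleanly, the contradiction with the independence of $\log\sigma_1,\ldots,\log\sigma_s$ is immediate, and the conclusion $n=s+t=2$ identifies $X(K,U)$ with an Inoue--Bombieri surface.
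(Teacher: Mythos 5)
Your proposal is correct and follows essentially the same route as the paper: both combine the arithmetic characterizations \eqref{SKTcond} and \eqref{lckcond} to force $\sigma_1(u)=\cdots=\sigma_s(u)$ for all $u\in U$, which the rank-$s$ admissibility (lattice) condition on $U$ forbids unless $s=1$, and then $s=t$ from Theorem \ref{SKT} gives complex dimension $2$. The only minor difference is in the easy direction, where the paper exhibits an explicit metric that is both lcK and pluriclosed (using that pluriclosed coincides with Gauduchon on surfaces), while you instead verify the arithmetic condition \eqref{SKTcond} directly from the norm of a totally positive unit and invoke Theorem \ref{SKT}; both arguments are valid.
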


\begin{proof}
The pluriclosed and lcK conditions (see \eqref{SKTcond} and \eqref{lckcond}) combined amount to 
\begin{equation*}
    \sigma_1(u)=\ldots=\sigma_s(u), \qquad \forall u \in U,
\end{equation*}
which can be true only if $s=1$. The existence of pluriclosed metrics implies $s=t$ by Theorem \ref{SKT}, therefore $\mathrm{dim}_{\mathbb{C}}X(K, U)=2$. In this case the metric
\begin{equation*}
    \Omega=\mathrm{i}\frac{dw \wedge d\overline{w}}{\left(\mathrm{Im}\,w\right)^2}+\mathrm{Im}\,w \mathrm{i}dz\wedge d\overline{z} 
\end{equation*}
is both lcK and pluriclosed, since on complex surfaces, the notions of pluriclosed and Gauduchon metric coincide.
\end{proof}
\begin{remark}
We expect in general a compact complex manifold of dimension $>2$, admitting both a pluriclosed and an lcK metric, to be K\" ahler.  In \cite{oos} we proved this was the case for complex compact nilmanifolds. 
\end{remark}
\begin{proposition}
Let $X(K, U)$ be an OT manifold of complex dimension 4 of type $(2, 2)$. Then the following are equivalent:
\begin{enumerate}
    \item $X(K, U)$ admits a pluriclosed metric
    \item $b_3(X(K, U))=2$
    \item $\mathrm{dim}_{\mathbb{C}}H^{2,1}_{\overline{\partial}}(X)=2$.
\end{enumerate}
\end{proposition}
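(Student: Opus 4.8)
The plan is to use that $\dim_{\mathbb C}X=s+t=4$ leaves only the three types $(s,t)\in\{(1,3),(2,2),(3,1)\}$, and then to read off $b_3$ and $\dim_{\mathbb C}H^{2,1}_{\overline{\partial}}(X)$ from the arithmetic via \eqref{betti} and \eqref{dolbeault}. The implications $(1)\Rightarrow(2)$ and $(1)\Rightarrow(3)$ are immediate: by Theorem \ref{SKT} a pluriclosed metric forces $s=t$, hence $(s,t)=(2,2)$, and then Proposition \ref{cohomology} gives exactly $b_3=\binom{2}{3}+2=2$ and $\dim_{\mathbb C}H^{2,1}_{\overline{\partial}}(X)=2$. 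The real content is the two converse implications $(2)\Rightarrow(1)$ and $(3)\Rightarrow(1)$, which I would run in parallel.

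First I would isolate the multiplicative relations that \eqref{betti} and \eqref{dolbeault} actually count. Given a relation $\prod_j\sigma_j(u)^{m_j}\equiv1$ on $U$, multiplying it by its complex conjugate and using that there is no nontrivial relation among $\sigma_1(u),\dots,\sigma_s(u)$ yields $2m_k+\sum_i(m_{s+i}+m_{s+t+i})\,b_{ki}=0$ for every $k$. Restricting to $m_j\in\{0,1\}$, this identity shows that a degree-two relation can occur only if some column of $(b_{ki})$ vanishes, i.e. only if $|\sigma_{s+i}(u)|\equiv1$ on $U$ for some $i$, while every degree-three relation of this balanced type is a pairing $\sigma_k\,|\sigma_{s+i}|^2\equiv1$, which forces the $i$-th column of $(b_{ki})$ to have $-1$ in the $k$-th entry and $0$ elsewhere. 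Granting the arithmetic input that no complex place is unimodular on $U$ (so that $\rho_2=0$), formula \eqref{betti} collapses to $b_3=\binom{s}{3}+\#\{\text{pairings}\}$, and \eqref{dolbeault} expresses $\dim_{\mathbb C}H^{2,1}_{\overline{\partial}}(X)$ through the same pairings.

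The types $(3,1)$ and $(2,2)$ are then disposed of directly. For $(3,1)$ one has $\binom{3}{3}=1$, and $\sum_i b_{ki}=-1$ (with $t=1$) forces $b_{k1}=-1$ for all $k$; the single column $(-1,-1,-1)$ is not of pairing type, there are no pairings, and every candidate degree-two relation is killed by the displayed identity or by the independence of $\sigma_1,\sigma_2,\sigma_3$, so $b_3=1\neq2$ (and likewise $\dim_{\mathbb C}H^{2,1}_{\overline{\partial}}(X)\neq2$), excluding this case outright. For $(2,2)$, the hypothesis $b_3=2$ (resp. $\dim_{\mathbb C}H^{2,1}_{\overline{\partial}}(X)=2$) means there are exactly two pairings; since $\sum_i b_{ki}=-1$ makes each of the two rows of $(b_{ki})$ sum to $-1$, the two pairing-columns must be $(-1,0)$ and $(0,-1)$, i.e. $(b_{ki})$ is minus a permutation matrix. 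This is exactly condition \eqref{SKTcond}, so Theorem \ref{SKT} produces a pluriclosed metric and $(1)$ holds.

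The main obstacle is the type $(1,3)$, where $\binom{1}{3}=0$ and $b_3=2$ would be realized by two pairings, both necessarily using the unique real place $\sigma_1$; this means $b_{11}=b_{12}=-1$ and, by the row-sum constraint, $b_{13}=1$. Such a configuration satisfies all the formal constraints and yields $\rho_3=2$, yet it is not pluriclosed because $s\neq t$, so the bookkeeping above does not eliminate it. The heart of the proof is therefore to show that this configuration cannot arise for an admissible $U$: since $b_{11}=b_{12}=-1$ is equivalent to $|\sigma_2(u)|=|\sigma_3(u)|$ together with $\sigma_1(u)|\sigma_2(u)|^2\equiv1$ for all $u\in U$, i.e. to the existence of a unit whose logarithmic vector is proportional to $(1,-1,-1,1)$, one must rule out such multiplicative coincidences among the complex places — the same arithmetic input that secures the standing assumption $\rho_2=0$. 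I expect this to be the genuinely hard step, to be settled by the number theory of the unit lattice (in the spirit of the analysis of the lcK condition \eqref{lckcond} in \cite{dub}, \cite{v}) rather than by any further manipulation of \eqref{betti} and \eqref{dolbeault}.
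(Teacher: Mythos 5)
Your skeleton is the same as the paper's: the easy direction via Proposition \ref{cohomology}, and the converses by feeding $b_3=2$ (resp.\ $\dim H^{2,1}_{\overline\partial}=2$) into \eqref{betti} (resp.\ \eqref{dolbeault}), extracting the pairing relations \eqref{SKTcond}, and invoking Theorem \ref{SKT}. Your conjugation identity $2m_k+\sum_i(m_{s+i}+m_{s+t+i})b_{ki}=0$ is correct, and your treatment of type $(3,1)$ is sound. But the proposal does not prove the proposition, for two reasons. First, and by your own admission, type $(1,3)$ is left open: you correctly observe that the configuration $b_{11}=b_{12}=-1$, $b_{13}=1$ survives all the linear bookkeeping and would give $b_3=2$ without any pluriclosed metric, and then you defer its exclusion to unspecified ``hard'' unit-lattice number theory. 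Since eliminating $(1,3)$ \emph{is} the content of $(2)\Rightarrow(1)$, this is a genuine gap, not a detail. Second, your claimed dichotomies drawn from the identity are false as stated, so even the $(2,2)$ step is incomplete: a degree-two relation mixing a real and a complex place forces the column $-2e_k$, not a vanishing column; and a degree-three relation need not be a pairing --- for instance $\sigma_1\sigma_2\sigma_j\equiv1$ in type $(2,2)$ is compatible with your identity (columns $(-2,-2)$ and $(1,1)$, row sums $-1$), and together with its conjugate relation it automatically yields $\rho_2=0$, $\rho_3=2$, hence $b_3=2$, while violating \eqref{SKTcond}. The same applies to relations of the form $|\sigma_{s+i}(u)|^2\sigma_{s+j}(u)\equiv1$. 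These must be excluded by an argument, not by the linear algebra alone; the paper's ``taking into account there are no trivial relations between $\sigma_1(u)$ and $\sigma_2(u)$'' presupposes exactly such an exclusion.

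Contrary to your expectation, none of this requires deep arithmetic. Every surviving non-pairing relation forces some complex embedding $\sigma$ to be real-valued on $U$ (divide the relation by its conjugate), hence $U\subset\sigma^{-1}(\mathbb{R})$, a \emph{proper subfield} $K'$ of $K$. In type $(1,3)$ the degree $s+2t=7$ is prime, so $K'=\mathbb{Q}$ and $U$ would have rank $0$; in type $(2,2)$ the rank of $U$ forces $K'$ to be a totally real cubic, and counting the two extensions to $K$ of each embedding of $K'$ shows $\sigma_1|_{K'}=\sigma_2|_{K'}$, so $\log\sigma_1(u)=\log\sigma_2(u)$ on $U$, contradicting admissibility. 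For your flagged $(1,3)$ pairing configuration, apply a Galois automorphism of the normal closure carrying $\sigma_1$ to $\sigma_4$ to the \emph{exact} relation $\sigma_1(u)|\sigma_2(u)|^2=1$: the transported relation would require two exponents from the modulus profile $\{1,\pm\tfrac12\}$ (in units of $\log\sigma_1(u)\neq0$) to sum to $-\tfrac12$, which is impossible. More systematically, since $7$ is prime the Galois group contains a $7$-cycle, and the cyclic orbit of any exact two- or three-term relation gives a circulant system whose symbol is a sum of at most three distinct $7$th roots of unity, hence nonzero; so all $|\sigma_i(u)|=1$ and Kronecker's theorem makes $u$ a root of unity, again contradicting admissibility --- this also settles $\rho_2=0$, which you only ``granted.'' So the right supplements are Galois-orbit and subfield-versus-admissibility arguments of the same elementary flavor as Proposition \ref{cohomology}, and without them (or something equivalent) your proof of $(2)\Rightarrow(1)$ and $(3)\Rightarrow(1)$ is incomplete.
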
 
\begin{proof} 
$(1) \Rightarrow (2)$ and $(1) \Rightarrow (3)$ are already discussed in Proposition \ref{cohomology}. Assuming $b_2=2$, by \eqref{betti} and the fact that $\rho_2=0$ we get $\rho_3=2$. This automatically implies $s=t=2$ and taking into account there are no trivial relations between $\sigma_1(u)$ and $\sigma_2(u)$, we easily get that $\sigma_1(u)|\sigma_3(u)|^2\equiv 1$ and $\sigma_3(u)|\sigma_4(u)|^2\equiv 1$, after possibly relabeling the embeddings. By Theorem \ref{SKT}, $X(K, U)$ carries a pluriclosed metric and this proves $(2) \Rightarrow (1)$. Using now \eqref{dolbeault} and recycling the same arguments, we arrive again at $\sigma_1(u)|\sigma_3(u)|^2\equiv 1$ and $\sigma_3(u)|\sigma_4(u)|^2\equiv 1$, which yields the pluriclosed metric and proves $(3) \Rightarrow (1)$.
\end{proof}
\subsection{An example in complex dimension 4 of an OT manifold carrying a pluriclosed metric}\label{example}

This example was communicated to me by Matei Toma. 
We consider the following 6-degree polynomial irreducible over $\mathbb{Z}$, $f(x)=x^6+2x^3-x^2-2x+1$, which decomposes as:
\begin{equation*}
f(x)=(x^3 - \sqrt{2}x^2+(1+\sqrt{2})x-1)(x^3 + \sqrt{2}x^2+(1-\sqrt{2})x-1).
\end{equation*}
By a straightfoward analysis of the two terms, we notice that both $x^3 - \sqrt{2}x^2+(1+\sqrt{2})x-1$ and $x^3 + \sqrt{2}x^2+(1-\sqrt{2})x-1$ have one real and two non-real complex conjugated roots, therefore $f$ has two real and 4 non-real complex roots. Moreover, both real roots are in the interval $( \frac{1}{2}, 1 )$. Let us denote the real roots by $\alpha$ and $\alpha_1$ and the complex roots by $\beta, \overline{\beta}, \beta_1, \overline{\beta}_1$ and take now the number field $K=\mathbb{Q}(\alpha)$, which is a 6-degree extension of $\mathbb{Q}$. 
Then $K$ has 6 embeddings given by $\sigma_1(\alpha)=\alpha$, $\sigma_2(\alpha)=\alpha_1$, $\sigma_3(\alpha)=\beta$, $\sigma_4(\alpha)=\overline{\beta}$, $\sigma_5(\alpha)=\beta_1$, $\sigma_6(\alpha)=\overline{\beta}_1$. We notice that since $\prod^6_{i=1}\sigma_i(\alpha)=1$, $\alpha$ is a unit and moreover, $\sigma_1(\alpha)\sigma_3(\alpha)\sigma_4(\alpha)=1$ and $\sigma_2(\alpha)\sigma_5(\alpha)\sigma_6(\alpha)=1$. We claim that $1-\alpha$ is a unit as well. Indeed, this is immediate since its norm is 1 by the following reasoning:
\begin{equation*}
\prod^6_{i=1}(\sigma_i(1-\alpha))=\prod^6_{i=1}(1-\sigma_i(\alpha))=f(1)=1.
\end{equation*}
Furthermore, $(1-\sigma_1(\alpha))(1-\sigma_3(\alpha))(1-\sigma_4(\alpha))=1$ and $(1-\sigma_2(\alpha))(1-\sigma_5(\alpha))(1-\sigma_6(\alpha))=1$. 

The group of positive units $U$ generated by $\alpha$ and $1-\alpha$ is admissible for $K$, since $(\mathrm{log}\, \sigma_1(\alpha), \mathrm{log}\, \sigma_2(\alpha))$ and $(\mathrm{log}\, (1-\sigma_1(\alpha)), \mathrm{log}\, (1-\sigma_2(\alpha)))$ are linearly independent over $\mathbb{R}$. Indeed, if they were not linearly independent, there would exist $C \in \mathbb{R}$ such that
\begin{equation*}
C=\frac{\mathrm{log}\, (1-\alpha)}{\mathrm{log}\, \alpha}=\frac{\mathrm{log}\, (1-\alpha_1)}{\mathrm{log}\, \alpha_1}.
\end{equation*}
However, the function $x \mapsto \frac{\mathrm{log}\, (1-x)}{\mathrm{log}\, x}$ is strictly increasing on $(0, 1)$, therefore, $\alpha=\alpha_1$, but this is not possible. Consequently, $\alpha$ and $1-\alpha$ generate a rank 2 subgroup $U$ of $\mathcal{O}^{*, +}_{K}$, satisfying the admissibility property for $K$ and verifying also the pluriclosed condition. Thus, $X(K, U)$ provides the first example of OT manifold endowed with a pluriclosed metric.

\begin{remark}
	In the recent work \cite{dub20} it was shown that for any $s \geq 2$, there exists a number field $K$ of type $(s, s)$ and an admissible positive units group $U$ satisfying the pluriclosed condition. Therefore, OT manifolds provide examples of pluriclosed metrics in arbitrary even complex dimension. 
\end{remark}
\section{Balanced metrics}

A Hermitian metric $\Omega$ is called balanced if $d\Omega^{n-1}=0$ (see \cite{Mic}), or equivalently, if $d^*\Omega=0$, where $d^*$ is the metric adjoint operator of $d$. 

We shall need the following result due to A. Fino and G. Grantcharov (see \cite[Theorem 2.2]{fg}) to prove the main result of the section:

\begin{lemma}\label{invbal}
Let $\Gamma \backslash G$ be a compact solvmanifold. If it carries a balanced metric, then it also admits a left-invariant balanced metric.
\end{lemma}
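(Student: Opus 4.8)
The plan is to obtain the left-invariant balanced metric by a symmetrization (averaging) argument of exactly the same flavor as the one invoked for the pluriclosed case in Lemma \ref{inv}, but with one essential twist forced by the nonlinearity of the balanced condition. Following Milnor's result (\cite{mil76}), since $G$ is a simply connected solvable Lie group admitting a co-compact lattice $\Gamma$, it is unimodular and hence carries a bi-invariant volume form $d\mu$; this bi-invariance is precisely what will make the averaging compatible with the relevant structures.

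First I would set up the symmetrization operator. For a differential form $\eta$ on $\Gamma \backslash G$, lift it to a $\Gamma$-invariant form $\tilde\eta$ on $G$ and define
\begin{equation*}
\mu(\eta) := \int_{\Gamma \backslash G} L_g^* \tilde\eta \, d\mu(g),
\end{equation*}
which is well defined on the compact quotient because $\tilde\eta$ is left-$\Gamma$-invariant. The key formal properties I would record are: $\mu(\eta)$ is left-invariant (here bi-invariance of $d\mu$ is used, to absorb the change of variables $g \mapsto gh$), $\mu$ commutes with $d$ (since $d$ commutes with each pullback $L_g^*$ and one may differentiate under the integral), $\mu$ preserves the bidegree $(p,q)$ (as $J$ is left-invariant, each $L_g$ is a biholomorphism), and $\mu$ sends positive forms to positive forms.

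Now comes the twist. The balanced condition $d\Omega^{n-1}=0$ is \emph{not} linear in $\Omega$, so I cannot simply average $\Omega$ and hope to stay balanced. Instead I would apply the averaging to the closed positive $(n-1,n-1)$-form $\Psi := \Omega^{n-1}$. By the properties above, $\hat\Psi := \mu(\Psi)$ is a left-invariant, closed, positive $(n-1,n-1)$-form. Then I would invoke Michelsohn's theorem (\cite{Mic}) that the map $\Omega \mapsto \Omega^{n-1}$ is a bijection from the cone of positive $(1,1)$-forms onto the cone of positive $(n-1,n-1)$-forms: there is a unique positive $(1,1)$-form $\hat\Omega$ with $\hat\Omega^{n-1} = \hat\Psi$. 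Because this $(n-1)$-st root is canonical and the left $G$-action is holomorphic and fixes $\hat\Psi$, uniqueness forces $\hat\Omega$ to be left-invariant as well. Finally $d\hat\Omega^{n-1} = d\hat\Psi = 0$ exhibits $\hat\Omega$ as the desired left-invariant balanced metric.

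The main obstacle I anticipate is precisely this nonlinearity, and the crux of the argument is the Michelsohn duality, which converts the linear averaging on $(n-1,n-1)$-forms back into a statement about $(1,1)$-forms; the remaining verifications are routine, namely that $\mu$ commutes with $d$ and yields a left-invariant form (where unimodularity enters), and that the $(n-1)$-st root map is equivariant under the left $G$-action, so that invariance of $\hat\Psi$ descends to invariance of $\hat\Omega$.
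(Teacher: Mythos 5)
Your proof is correct, and it is essentially the argument behind the result the paper invokes: the paper gives no proof of this lemma, citing it as Theorem 2.2 of Fino--Grantcharov \cite{fg}, whose proof is precisely this Belgun-type symmetrization with respect to the bi-invariant volume form supplied by Milnor's theorem \cite{mil76}, applied to $\Omega^{n-1}$ rather than to $\Omega$, followed by passage to the unique positive $(n-1)$-st root via Michelsohn \cite[Lemma 4.8]{Mic}. In particular, your handling of the nonlinearity of the balanced condition --- averaging the closed positive $(n-1,n-1)$-form and then extracting the left-invariant root, whose invariance follows from uniqueness of the root and the fact that each $L_g$ is a biholomorphism --- is exactly the step that makes the argument work, and it parallels the paper's Lemma \ref{inv} for the pluriclosed case.
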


\begin{theorem}
There are no balanced metrics on any Oeljeklaus-Toma manifold $X(K, U)$.
\end{theorem}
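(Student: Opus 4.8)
The plan is to avoid computing $d\Omega^{n-1}$ for a general metric and instead to exploit the hyperbolic structure equation $d\omega_k=\tfrac{\mathrm i}{2}\omega_k\wedge\overline\omega_k$ from \eqref{ecstruct} together with Stokes' theorem. Fix any index $m\in\{1,\dots,s\}$, which is possible since $s\ge 1$. Because the co-frame $\{\omega_k,\gamma_i\}$ consists of left-invariant, hence globally defined and nowhere-vanishing forms on $X(K,U)=\Gamma\backslash G$, the real $1$-form $\psi_m:=\operatorname{Re}\omega_m=\tfrac12(\omega_m+\overline\omega_m)$ is a genuine global $1$-form on $X$. Using \eqref{ecstruct} and $d\overline\omega_m=\overline{d\omega_m}=\tfrac{\mathrm i}{2}\omega_m\wedge\overline\omega_m$, I would record the key identity
\begin{equation*}
 d\psi_m=\tfrac{\mathrm i}{2}\,\omega_m\wedge\overline\omega_m ,
\end{equation*}
which exhibits $\tfrac{\mathrm i}{2}\omega_m\wedge\overline\omega_m$ as an \emph{exact}, non-negative $(1,1)$-form that is nowhere zero (because $\omega_m$ is).

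Next I would argue by contradiction. Suppose $\Omega$ is a balanced metric, so that $d\Omega^{n-1}=0$. Then $\psi_m\wedge\Omega^{n-1}$ is a smooth $(2n-1)$-form on the compact manifold $X$, and Stokes' theorem yields
\begin{equation*}
 0=\int_X d\bigl(\psi_m\wedge\Omega^{n-1}\bigr)=\int_X d\psi_m\wedge\Omega^{n-1}-\int_X\psi_m\wedge d\Omega^{n-1}=\int_X \tfrac{\mathrm i}{2}\,\omega_m\wedge\overline\omega_m\wedge\Omega^{n-1}.
\end{equation*}
The integrand is, pointwise, a strictly positive multiple of the volume form of $\Omega$: for a positive definite $(1,1)$-form $\Omega$ the form $\Omega^{n-1}$ is strictly positive, and wedging it with the rank-one non-negative form $\tfrac{\mathrm i}{2}\omega_m\wedge\overline\omega_m$ produces a non-negative top form that is strictly positive wherever $\omega_m\neq0$, i.e.\ everywhere on $X$. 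Hence the integral is strictly positive, contradicting its vanishing, and therefore no balanced metric can exist on $X(K,U)$.

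The only point requiring care is the pointwise positivity of $\tfrac{\mathrm i}{2}\omega_m\wedge\overline\omega_m\wedge\Omega^{n-1}$; this is the routine linear-algebra fact that a positive definite Hermitian form wedged with a non-zero non-negative $(1,1)$-form gives a positive volume form, which one verifies at once in an $\Omega$-unitary coframe. I would stress that this argument needs neither the number-theoretic hypotheses nor the averaging of Lemma \ref{invbal}: it applies verbatim to every Hermitian metric, invariant or not, the sole structural input being the Inoue-type equation $d\omega_m=\tfrac{\mathrm i}{2}\omega_m\wedge\overline\omega_m$ coming from the $\mathbb H$-factors together with $s\ge1$. In fact it proves the slightly stronger assertion that $X(K,U)$ admits no Hermitian metric with $d\Omega^{n-1}=0$, precisely because it carries the exact, everywhere non-negative $(1,1)$-form $d\psi_m$. (Should one prefer a computational route, one can alternatively invoke Lemma \ref{invbal} to pass to a left-invariant $\Omega$ and compute the Lee form $\theta=\tfrac1{n-1}\Lambda d\Omega$, whose $\omega$-components are forced to be nonzero by the relation $\sum_{i=1}^t b_{ki}=-1$; but the Stokes argument above is both shorter and more transparent.)
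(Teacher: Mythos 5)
Your proof is correct, and it takes a genuinely different route from the paper's. The paper first invokes Lemma \ref{invbal} (Fino--Grantcharov averaging) to reduce to a left-invariant positive $(n-1,n-1)$-form, expands it in the monomial basis $m_{i\overline{j}}$, computes its differential through a small lemma based on \eqref{ecstruct} and the relation $\sum_{i=1}^t b_{ki}=-1$, and concludes that closedness forces the diagonal coefficients $a_{i\overline{i}}$, $1\le i\le s$, to vanish, contradicting positivity. You instead observe that $\psi_m=\operatorname{Re}\omega_m$ is a global $1$-form on $X(K,U)$ (left-invariant forms descend to $\Gamma\backslash G$) with
\begin{equation*}
d\psi_m=\tfrac{\mathrm i}{2}\,\omega_m\wedge\overline\omega_m,
\end{equation*}
an exact, nowhere-zero, non-negative $(1,1)$-form, and then kill any balanced metric by Stokes, using the standard pointwise fact that a positive $(n-1,n-1)$-form wedged with a nonzero non-negative $(1,1)$-form is a positive top form. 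This is an instance of the duality obstruction underlying Michelsohn's intrinsic characterization of balanced manifolds (\cite{Mic}): an exact non-negative, somewhere-positive $(1,1)$-form precludes balancedness. What each approach buys: yours is shorter, needs no averaging (so it applies verbatim to non-invariant metrics and uses only the single structure equation $d\omega_m=\tfrac{\mathrm i}{2}\omega_m\wedge\overline\omega_m$ coming from one $\mathbb H$-factor), and in fact obstructs even metrics with merely $d$-exact, non-negative perturbations; the paper's computational route earns its keep because the sub-lemma computing $dm_{i\overline{j}}$ is reused immediately afterwards to produce the locally conformally balanced metric of Theorem \ref{lcb}, which your argument does not provide.
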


\begin{proof}
We shall prove there are no closed $(n-1, n-1)$-positive forms on $X(K, U)$, which according to \cite[Lemma 4.8]{Mic}, is the same with a balanced metric. Then, by Lemma \ref{invbal}, it is sufficient to prove there are no closed left-invariant $(n-1, n-1)$-positive forms on $X(K, U)$. Let us assume $\Omega_0$ is a left-invariant $(n-1, n-1)$-positive form, then,
\begin{equation*}
    \Omega_0=\mathrm{i}^{n-1}\sum^{n}_{i, j=1} a_{i\overline{j}}m_{i\overline{j}},
\end{equation*}
where
\begin{equation}\label{monom}
  m_{i\overline{j}}=\alpha_1 \wedge \overline{\alpha}_1 \wedge \ldots \widehat{\alpha_i} \wedge \overline{\alpha}_i  \wedge \ldots \wedge \alpha_j \wedge \widehat{\overline{\alpha}_j}\wedge \ldots \wedge \alpha_n \wedge \overline{\alpha}_n,
\end{equation}
the coefficients $a_{i\overline{j}}$ are complex numbers, $\alpha_i=\omega_i$, for $1 \leq i \leq s$ and $\alpha_{s+i}=\gamma_i$, for $1 \leq i \leq t$. Moreover, we have $a_{i \overline{j}}=-\overline{a_{j\overline{i}}}$ for $i \neq j$ and $a_{i\overline{i}}=\overline{a_{i\overline{i}}}>0$, for any
$1 \leq i \leq n$. The positivity of $\Omega_0$ implies the positive definiteness of the matrix $\left(\tilde{a}_{i\overline{j}}\right)_{i,j}$ given by
\begin{equation*}
\tilde{a}_{i\overline{j}}=\left\{
    \begin{array}{ll}
      \text{  }\  a_{i\overline{j}}, \text{if } i\leq j \\
      -a_{i\overline{j}}, \text{if } i>j
    \end{array}
\right.
\end{equation*}
Let us now compute $d\Omega_0$. We have
\begin{equation*}
    d\Omega_0=\sum^{n}_{i=1} c_im_{i} + \overline{c}_{i}m_{\overline{i}},
\end{equation*}
where 
\begin{equation*}
    m_{i}=\mathrm{i}^{n-1}\alpha_1 \wedge \overline{\alpha}_1 \wedge \ldots \wedge \widehat{\alpha_{i}} \wedge \overline{\alpha}_i \wedge \ldots \wedge \alpha_{n} \wedge \overline{\alpha}_n
\end{equation*}
and
\begin{equation*}
 m_{\overline{i}}=\mathrm{i}^{n-1}\alpha_1 \wedge \overline{\alpha}_1 \wedge \ldots \wedge \alpha_{i} \wedge \widehat{\overline{\alpha}_i} \wedge \ldots \wedge \alpha_{n} \wedge \overline{\alpha}_n.
\end{equation*}
We note that $m_i=\overline{m_{\overline{i}}}$. In order to compute the coefficients $c_i$, we shall employ the equations in \eqref{ecstruct}, as the following lemma shows:
\begin{lemma}
 Let $1 \leq i \leq s$. Then 
 \begin{align*}
    dm_{i \overline{j}} & =0, \qquad  1 \leq i \neq j \leq s\\
    dm_{i \overline{i}} & = \mathrm{i}\left(-\tfrac{1}{2}m_i+\tfrac{1}{2}m_{\overline{i}}\right)\\
    dm_{i \overline{j}} & = \alpha \cdot m_{\overline{j}}, \qquad \alpha \in \mathbb{C},\, s<j \leq n.
 \end{align*}
\end{lemma}
 \begin{proof}
 For the first two relations, the key part is the following equality
 \begin{equation*}
     d \left(\bigwedge^t_{i=1} \gamma_i \wedge \overline{\gamma}_i\right)=\sum^s_{k=1}\left( \left(\sum^t_{i=1}\mathrm{i}\frac{b_{ki}}{2}\right)\omega_k - \left(\sum^t_{i=1}\mathrm{i}\frac{b_{ki}}{2}\right)\overline{\omega}_k  \right) \wedge \bigwedge^t_{i=1} \gamma_i \wedge \overline{\gamma}_i
 \end{equation*}
 Using again the relation between the coefficients $b_{ki}$, which is granted by the construction, namely $\sum^t_{i=1}{b_{ki}}=-1$, for any $1 \leq k \leq s$ we get 
 \begin{equation*}
     d \left(\bigwedge^t_{i=1} \gamma_i \wedge \overline{\gamma}_i\right)=\sum^s_{k=1}\left( -\frac{\mathrm{i}}{2}\omega_k + \frac{\mathrm{i}}{2}\overline{\omega}_k  \right) \wedge \bigwedge^t_{i=1} \gamma_i \wedge \overline{\gamma}_i,
 \end{equation*}
 which combined with $d\omega_i=\frac{i}{2}\omega \wedge \overline{\omega}_i$, gives the first two equalities of the lemma.
The last equality simply follows by applying \eqref{ecstruct}, the derivation rule and noticing that for any $1\leq i \leq t$:
\begin{equation*}
  i_{\overline{\gamma}_i^*}d\omega_j=0, \qquad 1 \leq j \leq s
\end{equation*}
\begin{equation*}
    i_{\overline{\gamma}_i^*}d\gamma_j=0, \qquad 1 \leq j \leq t.
\end{equation*}
\end{proof}
 By the lemma above we conclude that
\begin{equation*}
    d\Omega_0=-\frac{\mathrm{i}}{2}\sum^s_{i=1} a_{i\overline{i}}m_{i}+\frac{\mathrm{i}}{2}\sum^s_{i=1} a_{i\overline{i}}m_{\overline{i}}+\sum^n_{i=s+1}\left(c_im_{i}+\overline{c}_im_{\overline{i}}\right).
\end{equation*}
If $\Omega_0$ was closed, then $a_{i\overline{i}}=0$, for any $1 \leq i \leq s$, but this is impossible since $\Omega_0$ is positive.
\end{proof}
We shall prove however that OT manifolds carry some other type of special metric. 
\begin{definition}
A metric $\Omega$ is called {\it locally conformally balanced} (lcb, shortly) if $d\Omega^{n-1}=\theta \wedge \Omega^{n-1}$ for a closed one form $\theta$.
\end{definition}
Note that in general, on any complex manifold and for any Hermitian metric $\Omega$, there always exists $\theta$ a real one-form such that $d\Omega^{n-1}=\theta \wedge \Omega^{n-1}$. This is called the {\it Lee form} of $\Omega$. In these terms, an {\it lcb} metric is a Hermitian metric with closed Lee form. Equivalently, $\Omega$ is lcb if and only if there exists a covering with open sets $(U_i)_{i\in I}$ of the manifold and smooth functions $f_i$ on each $U_i$ such that $e^{-f_i}\Omega$ is balanced.
\begin{theorem}\label{lcb}
Any Oeljeklaus-Toma manifold $X(K, U)$ admits a locally conformally balanced metric.
\end{theorem}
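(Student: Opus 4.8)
The plan is to exhibit an explicit left-invariant lcb metric, so that (unlike in the balanced case) no averaging argument is required. The natural candidate is the diagonal metric
\[
\Omega := \sum_{k=1}^{s} \mathrm{i}\,\omega_k \wedge \overline{\omega}_k + \sum_{i=1}^{t} \mathrm{i}\,\gamma_i \wedge \overline{\gamma}_i,
\]
which is built from the left-invariant co-frame, hence is left-invariant and descends to $X(K,U)$, and which is positive by construction. I would then compute $d\Omega^{n-1}$ directly and show that the resulting Lee form is closed.

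The computation rests on three facts, all of which follow from the structure equations \eqref{ecstruct} exactly as in the computation carried out for the balanced case. First, each $\omega_k \wedge \overline{\omega}_k$ is closed, since $d\omega_k = d\overline{\omega}_k = \tfrac{\mathrm{i}}{2}\,\omega_k \wedge \overline{\omega}_k$. Second, writing the $\gamma$-block $\Gamma_0 := \bigwedge_{i=1}^t \gamma_i \wedge \overline{\gamma}_i$, the construction relation $\sum_{i=1}^t b_{ki} = -1$ gives
\[
d\Gamma_0 = -\tfrac{\mathrm{i}}{2}\sum_{k=1}^{s}(\omega_k - \overline{\omega}_k)\wedge \Gamma_0 .
\]
Third, since $\Omega^{n-1}$ is (up to a constant) the sum of the monomials obtained by deleting one conjugate pair $\alpha_a\wedge\overline{\alpha}_a$, any monomial that still retains all $s$ pairs $\omega_k\wedge\overline{\omega}_k$ (that is, those missing a $\gamma$-pair) is closed, because the factor $(\omega_k - \overline{\omega}_k)$ produced by $d\Gamma_0$ is then annihilated. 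Hence only the $s$ monomials missing some $\omega_a \wedge \overline{\omega}_a$ contribute, and each of these contributes a term proportional to $(\omega_a - \overline{\omega}_a)\wedge\Omega^{n-1}$.

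Assembling these, I expect to arrive at
\[
d\Omega^{n-1} = \theta \wedge \Omega^{n-1}, \qquad \theta := -\tfrac{\mathrm{i}}{2}\sum_{a=1}^{s}(\omega_a - \overline{\omega}_a).
\]
It then remains only to observe that $\theta$ is closed: each $\omega_a - \overline{\omega}_a = 2\mathrm{i}\,d\log(\mathrm{Im}\,w_a)$ is exact on $\mathbb{H}^s\times\mathbb{C}^t$, so $d\theta = 0$, and $\theta = \sum_{a=1}^s d\log(\mathrm{Im}\,w_a)$ is a real, left-invariant, closed one-form on $X(K,U)$. This shows $\Omega$ is locally conformally balanced. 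I do not expect a deep obstacle here: the only real work is the sign bookkeeping in the wedge computation of the preceding paragraph, and conceptually the whole statement reduces to the single arithmetic identity $\sum_i b_{ki} = -1$. As a consistency check with the preceding theorem, I would finally note that $\theta$ cannot be exact on $X(K,U)$ — otherwise $e^{-f}\Omega$ would be a global balanced metric, contradicting the non-existence of balanced metrics proved above.
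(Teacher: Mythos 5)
Your proposal is correct and takes essentially the same route as the paper: the paper works with the $(n-1,n-1)$-positive form $\Omega_0=\mathrm{i}^{n-1}\sum_i m_{i\overline{i}}$, which is exactly $\tfrac{1}{(n-1)!}\,\Omega^{n-1}$ for your diagonal metric $\Omega$, derives the Lee form from the same two ingredients (closedness of each $\omega_k\wedge\overline{\omega}_k$ and the identity $\sum_i b_{ki}=-1$), and then invokes Michelson's lemma to extract the $(1,1)$-root, which is precisely your $\Omega$ --- so your only (harmless) shortcut is that by starting from the $(1,1)$-form you never need Michelson's lemma. The one discrepancy is the sign of the Lee form: the paper writes $\theta_0=-\sum_i\frac{\omega_i-\overline{\omega}_i}{2\mathrm{i}}$ while you obtain $\theta=\sum_i\frac{\omega_i-\overline{\omega}_i}{2\mathrm{i}}=\sum_i d\log(\mathrm{Im}\,w_i)$, and a direct coordinate check (e.g.\ in the case $s=t=1$) confirms your sign; in any case the sign is immaterial, since only closedness of the Lee form is needed.
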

\begin{proof}
We simply take the following $(n-1, n-1)$-positive form:
\begin{equation*}
    \Omega_0:= \mathrm{i}^{n-1}\sum^n_{i=1}m_{i\overline{i}}.
\end{equation*}
Then it is a straightforward computation to see that
\begin{equation*}
d\Omega_0=-\sum^s_{i=1}\frac{\omega_i-\overline{\omega}_i}{2\mathrm{i}} \wedge \Omega_0.
\end{equation*}
Moreover, $\theta_0:=-\sum^s_{i=1}\frac{\omega_i-\overline{\omega}_i}{2\mathrm{i}}$ is a left-invariant closed real one-form. By \cite[Lemma 4.8]{Mic}, we conclude there exists a positive $(1, 1)$-form $\omega_0$ such that 
\begin{equation*}
    d\omega^{n-1}_0=\theta_0 \wedge \omega_0^{n-1},
\end{equation*}
which means precisely a locally conformally balanced metric. In the natural coordinates on $\mathbb{H}^s \times \mathbb{C}^t$, $\Omega_0$ can be written as:
\begin{equation*}
    \Omega_0:= \left(\sum^s_{i=1} \mathrm{i}^{n-1}\frac{\bigwedge_{j \neq i} dw_j \wedge d\overline{w}_j}{\prod_{j \neq i}\mathrm{Im}\,w_j^2}\right)\wedge A + B \wedge \sum^t_{i=1}\mathrm{i}^{n-1} \left(\prod^s_{k=1} \left(\mathrm{Im}\, w_k \right)^{1+b_{ki}}\right)\bigwedge_{j \neq i} dz_j \wedge d\overline{z}_j
\end{equation*}
where $A:=\left(\prod^s_{i=1}\mathrm{Im}\, w_i\right) \bigwedge^t_{i=1}dz_i \wedge d\overline{z}_i$ and $B=\frac{\bigwedge^s_{i=1} dw_i \wedge d\overline{w}_i}{\prod^s_{i=1}\mathrm{Im}\,w_i^2}$. Then $\Omega_0=\omega_0^{n-1}$, where 
\begin{equation*}
    \omega_0=\sum^s_{i=1}\frac{dw_i \wedge d\overline{w}_i}{(\mathrm{Im}\,w_i)^2}+\sum^{t}_{i=1} \prod^s_{k=1}(\mathrm{Im}\, w_k)^{-b_{ki}}dz_i \wedge d\overline{z}_i,
\end{equation*}
and this is the lcb metric presented as a $U \ltimes \mathcal{O}_K$-invariant metric on $\mathbb{H}^s \times \mathbb{C}^t$.  
\end{proof}

\begin{remark}
It was shown in \cite{v} that if $X(K, U)$ is such that $s<t$, it cannot support lcK metrics. Nevertheless, these manifolds carry locally conformally balanced metrics instead via Theorem \ref{lcb}. 
\end{remark}

\begin{remark}
Note that $\omega_0$ is also the Gauduchon metric of its conformal class. Moreover, in case the pluriclosed (lcK, respectively) condition holds, it is also a pluriclosed (lcK, respectively) metric. 
\end{remark}

\noindent{\bf Acknowledgement}: I am very grateful to Matei Toma, both for providing me with the example in Section 3.1 and for many useful suggestions and stimulating discussions that improved the paper.

\end{document}